\documentclass[reqno,a4paper,10pt]{amsart}
\title[How roundoff errors help to compute the rotation set]{How roundoff errors help to compute the rotation set of torus homeomorphisms}
\date{\today}
\usepackage[latin1]{inputenc}
\usepackage[english]{babel}
\usepackage[T1]{fontenc}
\usepackage{amsfonts}
\usepackage{amsmath}
\usepackage{amssymb}
\usepackage{stmaryrd}
\usepackage{amsthm}
\usepackage{enumerate}
\usepackage{pgf,tikz}
\usetikzlibrary{arrows}
\usepackage[pdftex,colorlinks=true,linkcolor=blue,citecolor=blue,urlcolor=blue]{hyperref}
\usepackage{float}
\usepackage{changepage}
\usepackage{caption}

\author{Pierre-Antoine Guihéneuf}
\newtheorem{lemme}{Lemma}
\newtheorem{theoreme}[lemme]{Theorem}
\newtheorem{prop}[lemme]{Proposition}
\newtheorem{coro}[lemme]{Corollary}

\newtheorem*{theorem}{Theorem}

\theoremstyle{definition}
\newtheorem{definition}[lemme]{Definition}

\theoremstyle{remark}
\newtheorem{rem}[lemme]{Remark}

\newtheorem{ex}[lemme]{Example}

\newcommand{\N}{\mathbf{N}}
\newcommand{\Pb}{\mathcal{P}}
\newcommand{\R}{\mathbf{R}}

\newcommand{\T}{\mathbf{T}}

\newcommand{\Q}{\mathbf{Q}}

\newcommand{\varep}{\varepsilon}
\newcommand{\Hom}{\mathrm{Homeo}}

\newcommand{\Leb}{\mathrm{Leb}}
\newcommand{\ud}{\mathrm{d}}

\newcommand{\card}{\operatorname{Card}}

\newcommand{\Obs}{\operatorname{Obs}}
\newcommand{\diam}{\operatorname{diam}}
\newcommand{\dist}{\operatorname{dist}}

\setlength\arraycolsep{2pt}
\setlength{\leftmargini}{12pt}

\addtolength{\oddsidemargin}{-.4cm}
\addtolength{\evensidemargin}{-.4cm}
\addtolength{\textwidth}{.8cm}
\addtolength{\topmargin}{-.5cm}
\addtolength{\textheight}{1cm}

\captionsetup{width=\linewidth}

\hyphenation{Auto}
\hyphenation{Homeo}

\begin{document}

\begin{abstract}
The goals of this paper are to obtain theoretical models of what happens when a computer calculates the rotation set of a homeomorphism, and to find a good algorithm to perform simulations of this rotation set. To do that we introduce the notion of observable rotation set, which takes into account the fact that we can only detect phenomenon appearing on positive Lebesgue measure sets; we also define the asymptotic discretized rotation set which in addition takes into account the fact that the computer calculates with a finite number of digits

It appears that both theoretical results and simulations suggest that the asymptotic discretized rotation set is a much better approximation of the rotation set than the observable rotation set, in other words we need to do coarse roundoff errors to obtain numerically the rotation set.
\end{abstract}

\email{pierre-antoine.guiheneuf@math.u-psud.fr}
\address{Laboratoire de mathématiques CNRS UMR 8628\\Université Paris-Sud 11, Bât. 425\\91405 Orsay Cedex FRANCE}
\subjclass[2010]{37M05, 37M25, 37E45, 37A05}
\keywords{Rotation set, computation, generic homeomorphism}

\maketitle

\section{Introduction}

In 1885 in \cite{Poincare}, H. Poincaré introduced one of the first dynamical invariants, the rotation number for orientation preserving homeomorphisms of the circle. This definition led him to the theorem of classification: every orientation-preserving homeomorphism of the circle of irrational rotation number
$\alpha$ is semi-conjugate to the rigid rotation of angle $\alpha$. Ever since, considerations about the rotation number have helped us to improve our knowledge about the dynamical behaviour of circle homeomorphisms (see for example \cite{MR538680}).

About a century after was introduced a generalisation to dimension 2 of this rotation number, the rotation set for homeomorphisms of the torus. Due to the loss of natural order on the phase space, there is no longer a single speed of rotation for orbits; informally the rotation set is then the set of all possible rotation speeds of all possible orbits. Like in dimension 1 this topological invariant gives precious informations about the dynamics of the homeomorphism. For example, depending of the shape of this set we can ensure the existence of periodic points (\cite{MR967632}, \cite{MR958891}). Moreover, the size of this convex set gives lower bounds on the topological entropy of the homeomorphism (\cite{MR1101087}, \cite{MR1213082} for an explicit estimation)\dots

The aim of this paper is to tackle the question of numerical approximation of the rotation set: given a homeomorphism of the torus homotopic to the identity, is it possible to compute numerically its rotation set? In particular, is it possible to detect its dimension? Is it possible to approximate it in Hausdorff topology?
\medskip

First of all, we build a theoretical model of what happens when we try to calculate the rotation set of an homeomorphism with a computer. To do that, we first have to take into account the fact that the computer can calculate only a finite number of orbits; in particular it will detect only phenomenon that occur on positive Lebesgue measure sets. This leads to the notion of \emph{observable rotation set}: a rotation vector is called observable if it is the rotation vector of an observable measure in the sense given by E. Catsigeras and H. Enrich in \cite{MR2852870}; more precisely a measure $\mu$ is observable if for every $\varep>0$, the set of points which have a Birkhoff limit whose distance to $\mu$ is smaller than $\varep$ has positive Lebesgue measure (see Definition \ref{DefObs}).

But this notion of observable measure does not take into account the fact that the computer uses finite precision numbers and can calculate only finite length orbits; it leads to the definition of the \emph{asymptotic discretized rotation set} in the following way. We fix a sequence of finite grids on the torus with precision going to 0; the discretized rotation set on one of these grids is the collection of rotation vectors of periodic orbits of the discretization of the homeomorphism on this grid (see Section \ref{DiscRot}); the asymptotic discretized rotation set is then the upper limit of these discretized rotation sets on the grids.

We focused mainly on the generic behaviour\footnote{A property will be called \emph{generic} if it is true on at least a countable intersection of dense open sets.} of both observable and asymptotic discretized rotation sets. We recall that a theorem of A. Passeggi \cite{rata} states that for a generic dissipative homeomorphism of the torus the rotation set is a polygon with rational vertices, possibly degenerated\footnote{Namely it can be a segment or a singleton.}\footnote{However there are open sets of homeomorphisms where the rotation set has nonempty interior.}. In this paper we will prove the following:

\begin{theorem}
For a generic dissipative homeomorphism, the observable rotation set is the closure of the set of rotation vectors corresponding to Lyapunov stable periodic points (Lemma \ref{ExGeneDissip}). Then, the convex hull of the observable rotation set, the convex hull of the asymptotic discretized rotation set and the rotation set are equal. Moreover if the rotation set has nonempty interior there is no need to take convex hulls, \emph{i.e.} the observable rotation set coincides with the asymptotic discretized rotation set and the rotation set (Propositions \ref{GeneDissip} and \ref{RotDiscrDissip}).
\end{theorem}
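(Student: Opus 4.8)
The plan is to read this statement as a synthesis of three ingredients that I would establish separately: the generic dissipative structure (Lemma \ref{ExGeneDissip}), the identification of the observable rotation set (Proposition \ref{GeneDissip}), and the computation of the asymptotic discretized rotation set (Proposition \ref{RotDiscrDissip}). First I would set up the basic generic picture for dissipative homeomorphisms of $\T^2$: on a dense $G_\delta$ set, Lebesgue-almost every point lies in the basin of one of finitely many attracting (hence Lyapunov stable) periodic orbits, and these basins exhaust a full-measure subset. This rests on a standard perturbation argument --- attracting periodic orbits are an open and dense phenomenon in the dissipative setting --- combined with Passeggi's description of the rotation set as a rational polygon, possibly degenerate.

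Granting this structure, the first equality is almost immediate. An invariant measure is observable in the sense of Catsigeras--Enrich precisely when a positive-Lebesgue-measure set of points has Birkhoff limits arbitrarily close to it; since almost every orbit converges to one of the sinks, the only observable measures are the uniform measures carried by the Lyapunov stable periodic orbits. Their rotation vectors are exactly the rotation vectors of those orbits, so the observable rotation set is the closure of the set of rotation vectors of Lyapunov stable periodic points. I would argue the forward inclusion from the full-measure convergence to sinks, and the reverse inclusion by noting that a sink has a basin of positive measure, which makes its periodic measure observable.

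For the equalities between the two convex hulls and the rotation set I would combine two facts. Every rotation vector of a periodic point lies in the rotation set, so the convex hulls of both the observable and the asymptotic discretized rotation sets are contained in it. Conversely, by Passeggi the rotation set is a rational polygon whose vertices, being extremal rational rotation vectors, are realized by periodic orbits via the Franks and Misiurewicz--Ziemian realization results; a generic dissipative perturbation can be arranged to render these vertex-realizing orbits attracting without moving their rotation vectors, so that every vertex becomes the rotation vector of a Lyapunov stable periodic point and taking convex hulls recovers the whole polygon. For the discretized side I would prove that on a sufficiently fine grid the discretization carries a periodic orbit shadowing each sink, with the same rotation vector in the limit, giving the lower bound; the matching upper bound requires showing that spurious discretized rotation vectors cannot persist under refinement, so that the upper limit remains inside the rotation set.

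The main obstacle is the nonempty-interior case, where no convex hull is taken. Here I must produce Lyapunov stable periodic orbits whose rotation vectors are \emph{dense} in the full two-dimensional polygon, not merely at its vertices: using that every rational point interior to the rotation set is realized by a periodic orbit, I would perturb, generically and dissipatively, to make a dense family of such orbits attracting while keeping control of their rotation vectors, then pass to the closure. Combined with the discretization detection argument, this yields the coincidence of all three sets. The delicate points are ensuring that perturbations accumulating on a dense set of rational vectors remain simultaneously compatible, so that the conclusion holds on a single dense $G_\delta$, and controlling the upper limit of the discretized rotation sets so that finite-precision artefacts do not enlarge the set.
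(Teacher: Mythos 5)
Your overall architecture matches the paper's: Passeggi's polygon theorem plus Franks' realization theorems to place attracting periodic points at the vertices (and, in the nonempty-interior case, at an $\varepsilon$-dense set of rational interior points of the polygon), a Baire argument to make these perturbations compatible on a single residual set, a trapping-region argument giving discretized periodic orbits that shadow the Lyapunov stable ones, and a separate containment of the upper limit of the $\rho(F_N)$ in $\rho(F)$ — which, as you correctly flag, is not automatic since periodic orbits of $f_N$ are only pseudo-orbits of $f$; the paper proves it as Proposition \ref{InclPts} by treating long and short discretized periodic orbits separately, so your earlier sentence deducing this containment from ``rotation vectors of periodic points lie in the rotation set'' is not right as stated.

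The genuine gap is the structural premise behind your first equality: you assert that for a generic dissipative homeomorphism, Lebesgue-almost every point lies in the basin of one of \emph{finitely many} attracting periodic orbits. This is false, and in fact contradicts the generic picture the paper relies on: if finitely many sinks attracted a full-measure set, each would carry a physical measure, whereas generically $f$ has \emph{no} physical measure and the set of Lyapunov stable periodic orbits is a Cantor set. Openness and density of ``possessing an attracting periodic orbit'' does not upgrade to your full-measure claim on a residual set. The correct substitute is the shredding lemma of Abdenur--Andersson: for every $\varepsilon>0$ there are finitely many strictly periodic open sets of diameter less than $\varepsilon$ whose basins cover all but $\varepsilon$ of Lebesgue measure. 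Combined with Lemma \ref{LyapGene} (for generic $f$, every strictly periodic topological ball contains a Lyapunov stable periodic point, obtained as a nested intersection of strictly periodic balls of the same period), this shows that any observable measure, hence any observable rotation vector, is within $2\varepsilon$ of the invariant measure (resp.\ rotation vector) of a Lyapunov stable periodic orbit for every $\varepsilon$, which is the content of Lemma \ref{ExGeneDissip}. Note that generically a typical orbit does \emph{not} converge to a single periodic orbit — it enters smaller and smaller periodic regions of growing period — which is precisely why the conclusion is phrased as a closure of periodic-orbit data rather than as a finite list of attractors. Once this step is replaced by the shredding-lemma argument, the rest of your outline goes through.
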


Thus, it is possible to obtain the rotation set of a generic dissipative homeomorphism from the observable or the asymptotic discretized rotation set. In other words, from the theoretical point of view, it is possible to recover numerically the rotation set of a generic homeomorphism. We now present the results in the conservative setting.

\begin{theorem}
For a generic conservative homeomorphism (\emph{i.e.} which preserves Lebesgue measure), we give a proof that the rotation set has nonempty interior (Proposition \ref{RotGeneCons}). Then, for a generic conservative homeomorphism, the observable rotation set consists in a single point: the mean rotation vector (Proposition \ref{ExGeneCons}). On the other hand the asymptotic discretized rotation set coincides with the rotation set (Corollary \ref{CoroRotDiscrCons}).
\end{theorem}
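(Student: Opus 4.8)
The plan is to establish the three assertions separately, since they rest on different mechanisms: the first two are soft genericity arguments, while the heart of the statement — and indeed the whole point of the paper — is the third, which says that, unlike the observable rotation set, the discretized one is not trapped at the unique physical measure. Throughout I would use that the rotation set equals $\{\rho_\mu : \mu \in \mathcal{M}(f)\}$, where $\mathcal{M}(f)$ is the weak-$*$ compact convex set of $f$-invariant probability measures and $\rho_\mu = \int_{\T^2}(\widetilde f(x)-x)\,\ud\mu(x)$ is a weak-$*$ continuous function of $\mu$; by Misiurewicz--Ziegler this set is already convex, hence coincides with $\rho(f)$.

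\emph{Nonempty interior (Proposition \ref{RotGeneCons}).} I would show that the set of conservative homeomorphisms whose rotation set is not contained in a line contains a dense open set. Density is a purely local construction: given $f$ and $\varepsilon>0$, I insert, in three disjoint small disks, area-preserving perturbations supported there that create three periodic orbits realizing three affinely independent rotation vectors (for instance small rigid rotations by rational angles conjugated into the disks). Openness comes from index theory: each such periodic point can be taken to have nonzero fixed-point index for the relevant iterate, hence persists under $C^0$-perturbation; and the total displacement of a periodic orbit over one period is an integer vector, so it cannot vary continuously and the realized rotation vector is locally constant. Thus the three non-collinear vectors stay in $\rho(g)$ for all $g$ near $f$, forcing $\rho(g)$ to have nonempty interior.

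\emph{Observable set reduces to a point (Proposition \ref{ExGeneCons}).} Here I would invoke the classical fact (Oxtoby--Ulam, in the refined forms of Alpern and others) that a $C^0$-generic conservative homeomorphism is ergodic with respect to Lebesgue measure. Birkhoff's ergodic theorem then yields that for Lebesgue-almost every $x$ the empirical measures $\frac1n\sum_{k<n}\delta_{f^k(x)}$ converge to $\Leb$, so $\Leb$ is observable in the sense of \cite{MR2852870}. Conversely any $\mu$ at positive distance from $\Leb$ fails to be observable, since the set of points whose Birkhoff limit is close to $\mu$ is contained in a Lebesgue-null set. Hence $\Leb$ is the unique observable measure and the observable rotation set is the singleton $\{\rho_{\Leb}\}$, the mean rotation vector.

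\emph{Discretized set fills the rotation set (Corollary \ref{CoroRotDiscrCons}).} Since each discretization $f_N$ is a self-map of a finite grid, its recurrent dynamics is carried by periodic orbits and every $f_N$-invariant measure is a convex combination of uniform measures on cycles, whose rotation vectors form exactly the discretized rotation set. The inclusion of the upper limit into $\rho(f)$ is routine: as the mesh tends to $0$ we have $f_N\to f$, so any weak-$*$ limit of $f_N$-invariant measures is $f$-invariant, and by continuity of $\mu\mapsto\rho_\mu$ the limiting rotation vectors lie in $\{\rho_\mu:\mu\in\mathcal{M}(f)\}=\rho(f)$. The reverse inclusion is the crux and the expected main obstacle: I must show that for a generic conservative $f$ the measures carried by the periodic orbits of $f_N$ accumulate, in the upper-limit sense, on the \emph{whole} set $\mathcal{M}(f)$ — precisely the phenomenon that coarse roundoff acts like a perturbation exploring all of the dynamics instead of settling on the physical measure. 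Granting this approximation result, the discretized rotation vectors accumulate on every $\rho_\mu$, hence on all of $\rho(f)$; combined with the nonempty interior from the first part, no convex hull is needed and the asymptotic discretized rotation set coincides with $\rho(f)$.
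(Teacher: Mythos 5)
Your second part (Oxtoby--Ulam ergodicity plus Birkhoff, hence $\Obs(f)=\{\Leb\}$ and $\rho^{obs}(F)=\{\rho_{mean}(F)\}$) matches the paper's Lemma \ref{PointCons} and Proposition \ref{ExGeneCons}, and your ``routine'' inclusion $\limsup_N\rho(F_N)\subset\rho(F)$ is an acceptable variant of Proposition \ref{InclPts}. But the other two parts have genuine gaps. For the nonempty interior, your density step --- creating three periodic orbits with affinely independent rotation vectors by perturbations supported in three small disjoint disks --- does not work as stated. A periodic orbit whose rotation vector differs from that of nearby orbits must wind around the torus in a different homotopy class; it cannot be manufactured by a ``small rigid rotation conjugated into a disk,'' whose periodic orbits stay in the disk and inherit whatever displacement the ambient dynamics imposes. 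Moreover the rotation set is upper semicontinuous in $F$, so a $C^0$-small perturbation can only realize vectors near the original rotation set; prescribing three independent vectors locally is exactly what one cannot do. The paper's proof is necessarily global: it uses Oxtoby--Ulam to make $f$ ergodic, closes a Birkhoff-typical recurrent point to get a persistent periodic point with rotation vector near $\rho_{mean}(F)$, then composes with a \emph{global} rotation of the torus --- which moves $\rho_{mean}(F)$ but not the rotation vectors of the persistent periodic points already created --- and repeats. Your persistence-via-index remark is fine, but it is the creation step that carries the difficulty.

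For Corollary \ref{CoroRotDiscrCons}, you have correctly located the crux but then written ``granting this approximation result,'' i.e.\ you assume precisely the statement that needs proof. The paper's argument (Lemma \ref{EnsRotDiscrCons}) is a Baire-category construction: for each finite set $D$ of rational vectors realizable by persistent periodic orbits, one perturbs $f$ so that for infinitely many $N$ the discretization $f_N$ has $\rho(F_N)=D$ exactly. The perturbation is built by splitting the grid $E_N$ into subgrids, applying the Lax--Alpern theorem to get a cyclic permutation of each subgrid close to $f$, rerouting one cycle per vector $v\in D$ along the corresponding true periodic orbit of $f$, and extending this finite map to a conservative homeomorphism close to $f$. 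Combined with Franks' realization theorem (every rational point of the interior of $\rho(F)$ is realized by a periodic orbit, which can be made persistent) and the nonempty interior from Proposition \ref{RotGeneCons}, this yields subsequences $\rho(F_{N_i})$ converging in Hausdorff distance to any prescribed compact subset of $\rho(F)$. Without some such mechanism --- and it is the whole point of the paper that roundoff supplies it --- your third part is not a proof.
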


These results suggest the quite surprising moral that to recover the rotation set of a conservative homeomorphisms it is better to do coarse roundoff errors at each iteration. More precisely, if we compute a finite number of orbits with arbitrarily good precision and long length we will find only the mean rotation vector of the homeomorphism, but if we make roundoff errors while computing we will be able to retrieve the whole rotation set.
\medskip

We have performed numerical simulations to see whether these behaviours can be observed in practice or not. To obtain numerically an approximation of the observable rotation set, we have calculated rotation vectors of long segments of orbits for a lot of starting points, these points being chosen randomly fore some simulations and being all the points of a grid on the torus for other simulations. For the numerical approximation of the asymptotic discretized rotation set we have chosen a fine enough grid on the torus and have calculated the rotation vectors of periodic orbits of the discretization of the homeomorphism on this grid.

We have chosen to make these simulations on an example where the rotation set is known to be the square $[0,1]^2$. It makes us sure of the shape of the rotation set we should obtain numerically, however it limits a bit the ``genericity'' of the examples we can produce.

In the dissipative case we made attractive the periodic points which realize the vertex of the rotation set $[0,1]^2$. It is obvious that these rotation vectors, which are realized by attractive periodic points with basin of attraction of reasonable size, will be detected by the simulations of both observable and asymptotic discretized rotation sets; that is we observe in practice: we can recover quickly the rotation set in both cases.

In the conservative setting we observe more or less the surprising behaviour predicted by the theory: when we compute the rotation vectors of long segments of orbits we obtain mainly rotation vectors which are quite close to the mean rotation vector, in particular we do not recover the initial rotation set. More precisely, when we perform simulations with less than one hour of calculus we only obtain rotation vectors close to the mean rotation vector, and when we let three hours to the computer we only recover one vertex of the rotation set $[0,1]^2$. On the other hand, the rotation set is detected very quickly by the convex hulls of discretized rotation sets (less than one second of calculus). Moreover, when we calculate the union of the discretized rotation sets over several grids to obtain a simulation of the asymptotic discretized rotation set, we obtain a set which is quite close to $[0,1]^2$ for Hausdorff distance. As for theoretical results, this suggests the following lesson:

\emph{When we compute segments of orbits with very good precision it is very difficult to recover the rotation set. However, when we decrease the number of digits used in computations we can obtain quickly a very good approximation of the rotation set. In fact, we have to adapt the precision of the calculus to the number of orbits we can obtain numerically.}

This phenomenon can be explained by the fact that each grid of the torus is stabilized by the corresponding discretization of the homeomorphism. Thus, there exists an infinite number of grids such that every periodic point of the homeomorphism is shadowed by some periodic orbits of the discretizations on these grids.
\medskip

\begin{rem}
In this paper we do not study the approximation of homeomorphisms by multivalued maps (see for instance \cite{MR2776399}); this point of view can maybe detect better the rotation set for examples that we do not have studied here, in particular when the vertices of the rotation set are obtained as hyperbolic periodic points of a diffeomorphism.
\end{rem}

\section{Notations and preliminaries}

\subsection{Notations}

The set of homeomorphisms of $\T^2$ will be denoted by $\Hom(\T^2)$ and the subset of $\Hom(\T^2)$ consisting in homeomorphisms preserving Lebesgue measure will be denoted by $\Hom(\T^2,\Leb)$. Elements of $\Hom(\T^2)$ will be called \emph{dissipative} and elements of $\Hom(\T^2,\Leb)$ will be called \emph{conservative}. As usual, these two spaces are equipped with the metric of uniform convergence.

We denote by $\Pb$ the set of probability measures on $\T^2$, equipped with a distance $\dist$ compatible with the weak-* topology; by Banach-Alaoglu-Bourbaki theorem $\Pb$ is compact. Let $f$ be a homeomorphism of the torus $\T^2$ homotopic to the identity\footnote{From now every homeomorphism will be supposed homotopic to the identity.}. For $x\in \T^2$, we denote by $p\omega (x)$ the set of limit points of the sequence
\[\left\{\frac{1}{n}\sum_{k=0}^{n-1}\delta_{f^k(x)}\right\}_{n\in\N^*}.\]
It is a compact subset of the set $\mathcal M(f)$ of $f$-invariant Borel probability measures.

For $K\subset \T^2$ we will denote by $\operatorname{diam}_{int}(K)$ the diameter of the biggest euclidean ball included in $K$. By $K'\subset\subset K$ we mean that there exists an open set $O$ such that $K'\subset O\subset K$. In the sequel the set $K$ will be called \emph{strictly periodic} if there exists an integer $i>0$ such that $f^i(K)\subset\subset K$.

\subsection{Generic properties}

The topological spaces $\Hom(\T^2)$ and $\Hom(\T^2,\Leb)$ are Baire spaces (see \cite{MR2931648}), \emph{i.e.} in these spaces the intersection of every countable collection of dense open sets is dense. We call $G_\delta$ a countable intersection of open sets; a property satisfied on at least a $G_\delta$ dense set is called \emph{generic}. Note that in a Baire space generic properties are stable under intersection. Sometimes we will use the phrase ``for a generic homeomorphism $f\in \Hom(\T^2)$ (resp. $\Hom(\T^2,\Leb)$), we have the property $(P)$''. By that we will mean that ``there exists a $G_\delta$ dense subset $G$ of $\Hom(\T^2)$ (resp. $\Hom(\T^2,\Leb)$), such that every $f\in G$ satisfy the property $(P)$''.

\subsection{Rotation sets}

The definition of the rotation set is made to mimic the rotation number for homeomorphisms of the circle. At first sight the natural generalisation to dimension 2 of this notion is the point rotation set, defined as follows. For every homeomorphism $f$ of the torus $\T^2$ homotopic to the identity we take a lift $F : \R^2\to\R^2$ of $f$ to the universal cover $\R^2$ of $\T^2$. The difference with the one dimensional case is that as we lose the existence of a total order on our space, the sequence $\frac{F^n(\tilde x)-\tilde x}{n}$ no longer need to converge. Thus, we have to consider all the possible limits of such sequences, called \emph{rotation vectors}; the set of rotation vectors associated to $\tilde x\in\R^2$ will be denoted by $\overline\rho(\tilde x)$.

Then, the \emph{point rotation set} is defined as $\rho_{pts}(F) = \bigcup_{\tilde x\in\R^2} \overline\rho(\tilde x)$. Unfortunately this definition is not very convenient and it turns out that when we interchange the limits in the previous definition, we obtain the \emph{rotation set}
\[\rho(F) = \bigcap_{M\in\N}\overline{\bigcup_{m\ge M} \left\{\frac{F^m(\tilde x)-\tilde x}{m}\mid \tilde x\in\R^2\right\}}\]
which has much better properties and is easier to manipulate. In particular, it is compact and convex (see \cite{MR1053617}), and it is the convex hull of $\rho_{pts}(F)$. Moreover, it coincides with the \emph{measure rotation set}:  if we denote by $D(F)$ the \emph{displacement function}, defined on $\T^2$ by $D(F)(x) = F(\tilde x)-\tilde x$, where $\tilde x$ is a lift of $x$ to $\R^2$ (one easily checks that this quantity does not depend of the lift), then (recall that $\mathcal M(f)$ is the set of $f$ invariant probability measures)
\[\rho(F) = \left\{\int_{\T^2} D(F)(x)\ \ud\mu\mid \mu\in\mathcal{M}(f)\right\}.\]

Finally, for a homeomorphism $f$ preserving Lebesgue measure, we denote by $\rho_{mean}(F)$ the \emph{mean rotation vector} of $F$:
\[\rho_{mean}(F) = \int_{T^2} D(F)(x)\ \ud\Leb(x).\]

The geometry of the rotation set of a generic dissipative homeomorphism is given by a theorem of A. Passeggi:

\begin{theoreme}[Passeggi, \cite{rata}]\label{ThRata}
On an open and dense set of homeomorphisms $f\in\Hom(\T^2)$, the rotation set is locally constant around $f$ and is equal to a rational polygon. 
\end{theoreme}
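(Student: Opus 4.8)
The plan is to produce an open and dense set $\mathcal{O}\subset\Hom(\T^2)$ on which $\rho(F)$ is a rational polygon and the map $g\mapsto\rho(G)$ is locally constant. The natural starting point is the measure description $\rho(F)=\{\int_{\T^2}D(F)\,\ud\mu\mid\mu\in\mathcal M(f)\}$, which presents the rotation set as the image of the compact convex set $\mathcal M(f)$ under the affine map $\mu\mapsto\int D(F)\,\ud\mu$. Hence each extreme point of $\rho(F)$ is the integral of $D(F)$ against some ergodic measure, and for every direction $v\in S^1$ the support value $h_v(F)=\max_{w\in\rho(F)}\langle v,w\rangle$ is attained by an ergodic measure maximizing $\mu\mapsto\int\langle v,D(F)\rangle\,\ud\mu$. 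I would take $\mathcal{O}$ to be the set of $f$ for which $\rho(F)$ has finitely many extreme points, each of them rational and realized by a Lyapunov stable (attracting) periodic orbit sitting inside a trapping open set $O$ (i.e. $\overline{f(O)}\subset O$), with the additional requirement that the forward dynamics carries every invariant measure onto the recurrence enclosed in these finitely many trapping regions.

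Openness and local constancy I would derive from robustness of this recurrent structure. Trapping open sets persist under $C^0$ perturbation, and the attracting periodic orbit they enclose persists with an \emph{unchanged} rotation vector: a persistent period-$q$ orbit has rotation vector $p/q$ where $p\in\Z^2$ is the associated deck transformation, an integer datum that cannot vary continuously. This gives $\mathrm{conv}\{w_v\}\subset\rho(G)$ for every vertex $w_v$ and all nearby $g$. Conversely, since invariant measures vary upper semicontinuously in the weak-$*$ topology while $D$ varies continuously, $\rho(G)$ is squeezed into any fixed neighborhood of $\rho(F)$; combined with the fact that all relevant recurrence stays captured by the same trapping regions, this rules out extremal rotation vectors outside $\mathrm{conv}\{w_v\}$. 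Hence $\rho(G)$ remains exactly the same rational polygon, which is simultaneously openness and local constancy.

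For density, given $f$ and $\varepsilon>0$ I would first choose a rational polygon $P$ with $\rho(F)\subset P$ and $P$ within $\varepsilon$ of $\rho(F)$ in Hausdorff distance. For each vertex direction $v$ I would take the maximizing ergodic measure $\mu_v$ and apply an ergodic closing/perturbation argument: after a small $C^0$ perturbation supported near the support of $\mu_v$, one creates a periodic orbit whose rotation vector is a prescribed rational vector $w_v$ close to the corresponding vertex of $\rho(F)$, and a further local perturbation makes this orbit attracting, equipped with a trapping neighborhood, without changing $w_v$. Performing these local perturbations over a finite set of directions dense enough to control the whole boundary of $\rho(F)$ yields a homeomorphism $g$ with $\mathrm{conv}\{w_v\}\subset\rho(G)$, while upper semicontinuity (the total perturbation being small) keeps $\rho(G)\subset P$; choosing the $w_v$ so that $\mathrm{conv}\{w_v\}$ fills $P$ up to $\varepsilon$ forces these bounds to coincide, so $g\in\mathcal{O}$ and $\dist(f,g)<\varepsilon$.

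The main obstacle is the density step, and specifically the simultaneous control of the two inclusions: realizing each extremal vertex by a \emph{stable, rational} periodic orbit produced by an arbitrarily small perturbation, while guaranteeing that no new extremal rotation vector is created and that the captured recurrence forces every invariant measure of $g$ to have rotation vector inside the target polygon. Reconciling the lower bound (from the created attracting orbits) with the upper bound (from upper semicontinuity and the trapping structure) is the delicate point and is what makes the resulting configuration robust. The degenerate cases, where $\rho(F)$ is a segment or a singleton and some normal directions collapse, require working in the affine span of $\rho(F)$ separately but introduce no new conceptual difficulty.
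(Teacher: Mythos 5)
The paper does not prove this statement: it is quoted as Theorem \ref{ThRata} from Passeggi \cite{rata}, so there is no internal proof to compare against, and your argument has to be judged on its own terms.

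Your outline correctly isolates the two halves of the problem --- a robust lower bound coming from persistent attracting periodic orbits whose rotation data $p/q$ is integral and hence locally constant, and a robust upper bound --- and the lower-bound half is sound (it is essentially how the paper itself argues in Proposition \ref{GeneDissip}). The gap is in the mechanism you propose for the upper bound. You define $\mathcal{O}$ by requiring that every invariant measure be supported in the recurrence enclosed by finitely many trapping regions, each surrounding one attracting periodic orbit, and you use these regions to pin rotation vectors. For a trapping region to pin the rotation vector of \emph{every} invariant measure it contains, it must lift to a set of bounded diameter (a periodic topological disk), in which case every measure carried by its cycle has rotation vector exactly $p/i$ for a single $p\in\Z^2$. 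But then $\mathcal{O}$ is empty whenever the target polygon has nonempty interior: by Franks' realization theorem \cite[Theorem 3.2]{MR958891}, every rational vector in the interior of $\rho(F)$ is realized by a periodic orbit, hence by an invariant measure, and such a measure cannot live in any of your finitely many regions, whose forced rotation vectors are only the finitely many vertices. If instead you allow the trapping regions to be large or essential so that they can absorb this extra recurrence, they no longer constrain rotation vectors at all and the upper bound collapses; the same objection defeats the density step, where you ask the created trapping structure to capture all recurrence. This is not a repairable detail but the central difficulty of the theorem: shredding-type confinement controls the orbits of Lebesgue-almost every point (which is why it suffices for the \emph{observable} rotation set in this paper), whereas $\rho(F)$ sees all invariant measures, including those supported on repelling and saddle-type sets. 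A correct proof must obtain the robust upper bound from a different device --- in Passeggi's work, a combinatorial bound on $\rho(G)$ coming from a covering/decomposition structure that persists under $C^0$-perturbation --- rather than from confinement of all invariant measures into rotation-coherent attracting regions.
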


We end this paragraph by giving a proof that if $f$ is a generic conservative homeomorphism of the torus then $\rho(F)$ has nonempty interior.

\begin{prop}\label{RotGeneCons}
If $f$ is generic\footnote{In fact on a open dense subset of $\Hom(\T^2,\Leb)$.} among $\Hom(\T^2,\Leb)$, then $\rho(F)$ has nonempty interior.
\end{prop}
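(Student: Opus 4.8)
The plan is to show that the set
\[\mathcal O=\bigl\{g\in\Hom(\T^2,\Leb)\mid \rho(G)\text{ has nonempty interior}\bigr\}\]
(where $G$ is a lift of $g$) contains an open dense subset. Since $\rho(G)$ is convex, it has nonempty interior as soon as it contains three non-collinear points; moreover every periodic orbit of $g$ carries an invariant measure whose rotation vector equals its mean displacement, namely its winding divided by its period. So it suffices to produce, arbitrarily close to any given $f$, a conservative homeomorphism possessing three periodic orbits with non-collinear rotation vectors, and to arrange these orbits to be robust under perturbation.

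For density, fix $f\in\Hom(\T^2,\Leb)$ and $\varep>0$. Because $f$ preserves $\Leb$, Poincar\'e recurrence provides, for every large $q$, a point $z$ whose orbit segment $z,f(z),\dots,f^{q-1}(z)$ consists of distinct points and satisfies $\dist(f^q(z),z)<\varep/4$; lifting, $F^q(\tilde z)-\tilde z$ is within $\varep/4$ of some integer vector $(m,n)$. I would then modify $f$ only in small disjoint balls around these points, composing each step with a uniform small drift: replacing the $i$-th image by the point obtained from $f^{i+1}(z)$ after translating by $(k/q,\ell/q)$ injects a total extra winding $(k,\ell)$ while moving each image by at most $(|k|+|\ell|)/q$; for $q$ large this stays below $\varep$, and the last step, together with the near-return, is used to close the orbit exactly. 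Performing this with $(k,\ell)=(0,0)$, with a nonzero horizontal winding, and with a nonzero vertical winding, on three disjoint recurrent segments, produces three periodic orbits with rotation vectors $(m,n)/q$, $(m+k,n)/q$ and $(m,n+\ell)/q$, which are not collinear. Each modification can be realized by an area-preserving homeomorphism supported near the segment, so the resulting $g$ is conservative and $\varep$-close to $f$.

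For openness I would make these orbits robust. Writing $p=(m,n)$, a periodic orbit of rotation vector $p/q$ is exactly a fixed point of $T_{-p}\circ G^{q}$, where $T_{-p}$ is the translation by $-p$ on the universal cover. During the construction I can arrange this fixed point to be non-degenerate, i.e. to carry a nonzero Lefschetz index (an elliptic or a hyperbolic local model, both of nonzero index, being compatible with the area-preserving constraint). By the degree-theoretic stability of isolated fixed points of nonzero index, every $C^0$-small perturbation $g'$ of $g$ still has a fixed point of $T_{-p}\circ (G')^{q}$ nearby, hence a periodic orbit of the \emph{same} rotation vector $p/q$. The three non-collinear vectors therefore survive, so $\rho(G')$ keeps nonempty interior and $g$ lies in the interior of $\mathcal O$. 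Consequently $\mathcal O$ contains the open dense set $\operatorname{int}\mathcal O$, which gives the stronger (footnote) form of the statement.

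The delicate point is the second paragraph: carrying out the winding injection and the exact closing by a homeomorphism that is simultaneously $\varep$-small, \emph{Lebesgue-preserving}, and endows the new orbit with a nonzero-index return map. This is a $C^0$ conservative closing step, and the area-preserving extension of the local models (rather than the purely topological construction of the orbit) is where the real work lies; one must also check that the three segments furnished by recurrence can be chosen with disjoint neighborhoods so that the three perturbations do not interfere.
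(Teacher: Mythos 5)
Your overall skeleton -- show that the set of $g$ with $\operatorname{int}\rho(G)\neq\emptyset$ contains an open dense set, by producing three periodic orbits with non-collinear rotation vectors and making them persistent -- is the same as the paper's, and your openness step is sound: fixed points of $T_{-p}\circ G^{q}$ of nonzero index survive $C^0$-perturbation with the same integer winding, hence the same rotation vector $p/q$; this is exactly what the paper means by ``persistent periodic points''. The problem is the density step, and it is the step you yourself flag as delicate: the ``winding injection'' along a recurrent segment does not work as described for a homeomorphism. If you move the $i$-th image by $(k/q,\ell/q)$, the next iterate of the \emph{perturbed} orbit is $f$ applied to the displaced point, so the distance between the perturbed and original orbits obeys $d_{i+1}\le (|k|+|\ell|)/q+\omega_f(d_i)$, where $\omega_f$ is the modulus of continuity of $f$; with no Lipschitz control this drift is unbounded over $q$ steps, the perturbed orbit leaves the small balls supporting your perturbation (so the later translations are never applied), and the near-return you need to close the orbit is destroyed. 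Nor can you escape by perturbing at a single step: an $\varep$-perturbation of the identity lifts to a map $\varep$-close to the identity, so one local modification changes the integer winding vector of a closed orbit by strictly less than $1$, i.e.\ not at all. So as written the construction does not produce the second and third rotation vectors, and the ``real work'' you defer is precisely the content of the proposition.

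The paper (following a trick of Crovisier) gets around this by never trying to change the winding of a given orbit locally. Instead it composes $f$ with a \emph{global} rotation $R_\alpha$ of the torus: this changes the displacement function by the constant $\alpha$ everywhere, hence shifts the mean rotation vector $\rho_{mean}(F)$ by exactly $\alpha$, while a persistent periodic point of $f$ survives with its old rotation vector. To realize the new mean rotation vector by a periodic orbit, it first perturbs $f$ to be ergodic (Oxtoby--Ulam), takes a Birkhoff-generic recurrent point $y$ so that $\frac{1}{N}(F^{N}(\tilde y)-\tilde y)$ is close to $\rho_{mean}(F)$ for large $N$, and closes the orbit with a \emph{single} local conservative perturbation at the near-return -- which changes the finite-time rotation vector by at most $\varep/N$ and involves no drift control at all. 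Iterating (one more rotation to push the mean off the line spanned by the first two vectors, one more ergodic closing) yields the three non-collinear persistent periodic rotation vectors. If you want to salvage your argument you should replace the winding injection by this global-rotation-plus-ergodic-closing mechanism; the rest of your proof (convexity, index persistence, openness) can then stand as is.
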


\begin{rem}
We do not know the shape of the boundary of the rotation set of a generic conservative homeomorphism. In particular we do not know if it is a polygon or not.
\end{rem}

\begin{proof}[Proof of Proposition \ref{RotGeneCons}]
We use an argument due to S. Crovisier. If $\rho(F)$ consists in a single point, we use classical perturbation techniques for conservative homeomorphisms (see \cite{MR2931648} or \cite{Daal-chao}) to create a persistent periodic point $x$ for $f$. Then by composing by a small rotation of the torus we can move a little the mean rotation vector; in particular as the rotation set still contains the rotation vector of the periodic point $x$, it is not reduced to a single point. Now if the rotation set is a segment, by a $C^0$ ergodic closing lemma we can create a persistent periodic point whose rotation vector is close to the mean rotation vector in the following way. A small perturbation allows us to suppose that the homeomorphism we obtained, still denoted by $f$, is ergodic (see \cite{Oxto-meas}). We then choose a recurrent point $y\in\T^2$ which verifies the conclusion of Birkhoff's theorem: for $N$ large enough, the measure $\frac{1}{N}\sum_{k=0}^{N-1} \delta_{f^k(y)}$ is close to Lebesgue measure. As this 
point is recurrent, by making a little perturbation, we can make it periodic and even persistent; by construction $\overline\rho (y)$ is close to the mean rotation vector. We now have two persistent periodic points, say $x$ and $y$, whose rotation vectors are different. It then suffices to compose by an appropriate rotation such that the mean rotation vector goes outside of the line generated by these two rotation vectors, and to repeat the construction to find a persistent periodic point whose rotation vector is close to this new mean rotation vector.
\end{proof}

\subsection{Observable measures}

From the ergodic point of view, we could be tempted to define the observable rotation set to be the set of rotation vectors associated to physical measures (or SRB measures, see \cite{MR1933431}), which are defined to state which measures can be observed in practice. However, such measures do not need to exist for every dynamical system, in this case the associated observable rotation set would be empty. To solve this problem of non existence of physical measures, E. Catsigeras and H. Enrich have defined in \cite{MR2852870} the weaker notion of observable measure:

\begin{definition}\label{DefObs}
A probability measure $\mu$ is \emph{observable} for $f$ if, for every $\varep>0$, the set 
\begin{equation}\label{defAep}
A_\varep(\mu) = \{x\in \T^2\mid \exists \nu\in p\omega(x) : \dist(\nu,\mu)<\varep \}
\end{equation}
has positive Lebesgue measure. The set of observable measures is denoted by $\Obs(f)$.
\end{definition}

The very interesting property of these measures is that, unlike physical measures, they always exist. More precisely, the set $\Obs(f)$ is a nonempty compact subset of the set of invariant measures of $f$ containing the set of physical measures (see \cite{MR2852870}).

\begin{rem}\label{remConj}
The behaviour of observable measures is compatible with topological conjugacy (whereas for physical measures the conjugacy needs to preserve null sets): if $\mu$ is observable for $f$ and $h$ is a homeomorphism, then $h^*\mu$ is observable for $hfh^{-1}$.
\end{rem}

\begin{ex}\label{ExObsMes}
If $f = \operatorname{Id}$, then $\Obs(f) = \{\delta_x\mid x\in X\}$, but $f$ has no physical measure.
\end{ex}

\begin{prop}\label{PointDissip}
If $f$ is generic among $\Hom(\T^2)$, then
\[\Obs(f) = \operatorname{Cl}\{\delta_\omega\mid \omega\text{ is a Lyapunov stable periodic orbit}\},\]
where $\operatorname{Cl}$ denotes the closure.
\end{prop}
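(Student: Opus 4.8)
The plan is to prove the two inclusions separately, the inclusion $\operatorname{Cl}\{\delta_\omega\mid\omega\text{ Lyapunov stable periodic}\}\subseteq\Obs(f)$ holding for every $f$, and only the reverse inclusion requiring genericity. First I would check that for an arbitrary $f$ and any Lyapunov stable periodic orbit $\omega=\{p_0,\dots,p_{q-1}\}$, the uniform measure $\delta_\omega$ on $\omega$ is observable. Fix $\varep>0$ and choose a neighborhood $U=\bigsqcup_i U_i$ of $\omega$, with $U_i$ a small ball around $p_i$, small enough that every invariant probability measure supported in $\overline U$ lies within $\varep/2$ of $\delta_\omega$ and that $f(U_i)\subset U_{i+1}$ (indices mod $q$); this is possible by continuity of $f$. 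Lyapunov stability provides a neighborhood $V\subset U$ with $f^k(V)\subset U$ for all $k\ge 0$. Any $x\in V$ then visits $U_0,U_1,\dots$ cyclically, so its empirical measures $\frac1n\sum_{k=0}^{n-1}\delta_{f^k(x)}$ put mass $1/q+O(1/n)$ in each $U_i$; hence every $\nu\in p\omega(x)$ is supported in $\overline U$ and satisfies $\dist(\nu,\delta_\omega)<\varep$. Since $V$ has positive Lebesgue measure, so does $A_\varep(\delta_\omega)\supset V$, whence $\delta_\omega\in\Obs(f)$. As $\Obs(f)$ is compact, hence closed, in $\Pb$, this gives the first inclusion.

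For the reverse inclusion the whole point is the following generic statement, which I would isolate as a key lemma: \emph{there is a dense $G_\delta$ subset of $\Hom(\T^2)$ on which the union of the basins of attraction of the Lyapunov stable periodic orbits has full Lebesgue measure.} Granting this, the inclusion $\Obs(f)\subseteq\operatorname{Cl}\{\delta_\omega\}$ is immediate: if $\mu$ is observable then $A_\varep(\mu)$ has positive measure for every $\varep>0$, so it meets the full-measure set of points attracted to some Lyapunov stable periodic orbit $\omega$; for such a point $x$ one has $p\omega(x)=\{\delta_\omega\}$, whence $\dist(\delta_\omega,\mu)<\varep$, and letting $\varep\to0$ gives $\mu\in\operatorname{Cl}\{\delta_\omega\}$. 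Intersecting the universal set of the first paragraph with the generic set of the key lemma then yields the equality on a dense $G_\delta$.

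To prove the key lemma I would run a Baire argument. For each $n$ let $\mathcal{G}_n$ be the set of $f$ for which there exist finitely many pairwise disjoint strictly periodic open sets $K_1,\dots,K_r$ (that is, $f^{i_j}(K_j)\subset\subset K_j$) of diameter less than $1/n$, each carrying a single Lyapunov stable periodic orbit attracting all of $K_j$, and whose basins together have Lebesgue measure larger than $1-1/n$. Openness of $\mathcal{G}_n$ comes from the persistence of the inclusions $f^{i_j}(K_j)\subset\subset K_j$ under $C^0$ perturbations, together with lower semicontinuity of the measure of the basins; the periodic orbit is kept by working inside a trapping neighborhood on which a suitable iterate is a contraction. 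For density, given $f$ and $\varep>0$ I would cover $\T^2$ by small balls and, using the standard $C^0$ perturbation techniques for dissipative homeomorphisms (local contractions and creation of attracting periodic points with controlled basins), funnel a proportion larger than $1-1/n$ of the torus into small disjoint strictly periodic regions, in each of which I install an attracting, hence Lyapunov stable, periodic orbit. The intersection $\bigcap_n\mathcal{G}_n$ is then the desired dense $G_\delta$, and on it the basins cover a set of measure $>1-1/n$ for every $n$, i.e.\ of full measure.

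The main obstacle is precisely this density step: producing an arbitrarily small $C^0$ perturbation whose attracting periodic orbits have basins filling all but an $\varep$-proportion of the measure, while simultaneously keeping the trapping regions of diameter $<1/n$ (so that the resulting empirical measures are genuinely close to Diracs on periodic orbits) and guaranteeing that the attractors created are honest periodic orbits rather than larger attracting sets. Controlling the measure of the leftover set uniformly, independently of the geometry of the given $f$, is the delicate point, and is where I expect the bulk of the argument to lie.
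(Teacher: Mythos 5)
Your first inclusion is correct and is essentially the paper's (which dispatches it in one line by remarking that every measure carried by a Lyapunov stable periodic orbit is observable). The reverse inclusion, however, rests entirely on your key lemma --- that generically the union of the basins of attraction of the Lyapunov stable periodic orbits has full Lebesgue measure --- and this is a genuine gap, not a deferred technicality. What the available $C^0$ tools (the shredding lemma of Abdenur--Andersson, which is what the paper invokes) actually give is weaker: for every $\varep>0$, a full-measure set of points is eventually trapped in the cycle of a periodic open set $O$ all of whose iterates have diameter $<\varep$. A point trapped in such a cycle need \emph{not} converge to the Lyapunov stable periodic orbit that $O$ contains: its $\omega$-limit set can be a larger invariant set inside the cycle (iterating the shredding produces nested trapping regions, and the intersection along a typical orbit is in general a Cantor set rather than a periodic orbit --- this is precisely why generic homeomorphisms have no physical measures). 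So the assertion that $p\omega(x)=\{\delta_\omega\}$ for a.e.\ $x$ is exactly where the argument would break. Your Baire scheme also has a concrete problem at the openness step: the condition ``$K_j$ carries a single attracting periodic orbit attracting all of $K_j$'' is not $C^0$-open (a $C^0$-small perturbation can replace an attracting periodic orbit by an attracting invariant set that is not a periodic orbit; and ``some iterate is a contraction on a trapping neighbourhood'' is not preserved under $C^0$-small perturbations, since Lipschitz constants are not controlled by the uniform distance). Only the trapping inclusions $f^{i_j}(K_j)\subset\subset K_j$ are robust.

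The repair is to weaken the key lemma to the approximate statement that actually suffices, which is the paper's route. Given $\mu\in\Obs(f)$ and $\varep>0$, the set $A_\varep(\mu)$ has positive measure; the shredding lemma then produces a positive-measure subset $B\subset A_\varep(\mu)$ all of whose points are eventually trapped in a periodic open set $O$ whose cycle has diameter $<\varep$. A separate genericity lemma (Lemma \ref{LyapGene} in the paper, proved by a Baire argument over a countable basis of trapping balls, using only the robust inclusion $f^i(K')\subset\subset K'$ and nesting balls of the \emph{same} period $i$ with diameters tending to $0$) guarantees that $O$ contains a Lyapunov stable periodic orbit $\omega$. For $x\in B$, every $\nu\in p\omega(x)$ is an invariant measure supported on the closure of the small cycle of $O$, hence $\dist(\nu,\delta_\omega)=O(\varep)$; combined with $\dist(\nu,\mu)<\varep$ this gives $\dist(\mu,\delta_\omega)<2\varep$ and thus $\mu\in\operatorname{Cl}\{\delta_\omega\mid\omega\text{ Lyapunov stable periodic}\}$, without ever claiming that any orbit converges to a periodic orbit.
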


Thus, $f$ has a lot of observable measures\footnote{The set of Lyapunov stable periodic orbits is a Cantor set.}, but no physical measure (see \cite{MR3027586}).

\begin{proof}[Proof of Proposition \ref{PointDissip}]
For the easy inclusion it suffices to remark that every stable measure supported by a Lyapunov stable periodic orbit is observable. For the other inclusion we need the following lemma:

\begin{lemme}\label{LyapGene}
Let $f$ be a generic dissipative homeomorphism of $\T^2$. Then for every strictly periodic topological ball $O$ (\emph{i.e.} there exists $i>0$ such that $f^i(O)\subset\subset O$) there exist a Lyapunov stable periodic point $x\in O$ (\emph{i.e.} for every $\varep>0$ there exists $\delta>0$ such that if $d(x,y)<\delta$ then for every $n\in\N$ we have $d(f^n(x),f^n(y))<\varep$).
\end{lemme}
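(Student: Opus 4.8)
The plan is to establish this as a genericity statement by the usual Baire scheme: reduce the uncountable quantifier ``for every strictly periodic ball'' to a countable one, and then, for each piece of data, produce a dense open set on which a local shrinking property holds.

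\emph{Reduction to a countable family.} First I would fix a countable basis $(B_\ell)$ of $\T^2$ made of small round rational disks (so that each $\overline{B_\ell}$ is a closed topological disk) and reduce to the following: for every rational disk $B$ with $f^q(\overline B)\subset\operatorname{int}(B)$ for some $q\ge 1$, there is a Lyapunov stable periodic point in $B$. Indeed, given an arbitrary strictly periodic ball $O$ with $f^i(\overline O)\subset\operatorname{int}(O)$, the equality $f^i(\overline O)=\overline{f^i(O)}$ (as $f^i$ is a homeomorphism) lets me slip a rational disk between the two compacta, $\overline{f^i(O)}\subset\operatorname{int}(B)\subset\overline B\subset\operatorname{int}(O)$; then $f^i(\overline B)\subset f^i(\overline O)=\overline{f^i(O)}\subset\operatorname{int}(B)$, so $B$ is a rational strictly periodic disk of the \emph{same} period $i$ contained in $O$, and any Lyapunov stable periodic point of $B$ lies in $O$.

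\emph{The generic local property and its geometric core.} For each rational disk $B$, each $q\ge 1$ and each $n\ge 1$ I would introduce the open sets $U_{B,q}=\{f: f^q(\overline B)\subset\operatorname{int}(B)\}$ and $A_{B,q,n}=\{f:\exists\text{ rational disk }V\subset B,\ \diam(V)<1/n,\ f^q(\overline V)\subset\operatorname{int}(V)\}$, and set $\mathcal G_{B,q,n}=U_{B,q}^{c}\cup A_{B,q,n}$. Both $U_{B,q}$ and $A_{B,q,n}$ are open by continuity of $f\mapsto f^q$, and the open set $A_{B,q,n}\cup\operatorname{int}(U_{B,q}^{c})$ is dense and contained in $\mathcal G_{B,q,n}$ provided one shows the only nontrivial fact, namely that $A_{B,q,n}$ is dense in $U_{B,q}$ (the behaviour off $\overline{U_{B,q}}$ being covered by $\operatorname{int}(U_{B,q}^{c})$, and $\partial U_{B,q}$ being approximated from inside $U_{B,q}$). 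This density is the heart of the proof. Starting from $f\in U_{B,q}$, the map $g=f^q$ sends the closed disk $\overline B$ into its interior, so by Brouwer's theorem $g$ has a fixed point $z^{*}\in\operatorname{int}(B)$; its $f$-orbit is a periodic orbit of period $q'\mid q$, hence a finite set of distinct points. A $C^0$-small perturbation of $f$ supported in disjoint small balls about these orbit points makes the orbit attracting (an elementary $C^0$ perturbation in the dissipative setting; compare the techniques of \cite{MR2931648}), producing a disk $V\ni z^{*}$ with $V\subset B$, $\diam(V)<1/n$ and $f^{q'}(\overline V)\subset\operatorname{int}(V)$; since $q'\mid q$ this upgrades for free to $f^q(\overline V)\subset\operatorname{int}(V)$, so the perturbed map lies in $A_{B,q,n}$.

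\emph{Conclusion by iteration on the residual set.} Let $\mathcal R=\bigcap_{B,q,n}\mathcal G_{B,q,n}$, a dense $G_\delta$. Fix $f\in\mathcal R$ and a strictly periodic ball $O$; by the first step I replace $O$ by a rational disk $O_0$ with $f^i(\overline{O_0})\subset\operatorname{int}(O_0)$ and set $q=i$ once and for all. Since $f\in U_{O_0,q}$ and $f\in\mathcal G_{O_0,q,1}$, necessarily $f\in A_{O_0,q,1}$, which yields a rational disk $O_1\subset O_0$ with $\diam(O_1)<1$ and $f^q(\overline{O_1})\subset\operatorname{int}(O_1)$. Iterating with $O_1,O_2,\dots$, keeping $q$ fixed, gives nested rational disks $O_0\supset O_1\supset\cdots$ with $\diam(O_k)\to 0$ and $f^q(\overline{O_k})\subset\operatorname{int}(O_k)$ for all $k$. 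Their intersection is a single point $x$; from $f^q(x)\in\bigcap_k\operatorname{int}(O_k)=\{x\}$ I get $f^q(x)=x$, while the $O_k$ form a basis of $f^q$-positively invariant neighbourhoods of $x$ of diameter tending to $0$, so $x$ is Lyapunov stable for $f^q$, hence (by uniform continuity of $f,\dots,f^{q-1}$) a Lyapunov stable periodic point of $f$; and $x\in O_0\subset O$.

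I expect the main obstacle to be the period bookkeeping together with the perturbation step: it is essential that Brouwer furnishes a genuine fixed point of $f^q$, so that one never has to \emph{close up} a long orbit (which would force the period to grow and would break the shrinking-to-a-point argument), and that trapping for $f^{q'}$ with $q'\mid q$ automatically gives trapping for $f^q$, which is precisely what allows $q$ to remain fixed along the whole nested sequence. The genuinely analytic content then reduces to the standard fact that a periodic orbit can be made attracting inside a prescribed arbitrarily small disk by a $C^0$-small, locally supported perturbation.
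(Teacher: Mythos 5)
Your proposal is correct and follows essentially the same route as the paper's proof: reduce to a countable family of balls, define open dense sets of homeomorphisms for which each strictly periodic ball in the family contains a much smaller strictly periodic ball of the same period (density via Brouwer's fixed point theorem applied to $f^q$ on the ball, followed by a $C^0$-small perturbation making the periodic orbit attracting), and conclude by intersecting a nested shrinking sequence of strictly periodic balls to obtain a Lyapunov stable periodic point. Your version is somewhat more careful about the period bookkeeping (the divisor $q'\mid q$ and keeping $q$ fixed along the nested sequence), which the paper leaves implicit, but the architecture is identical.
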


\begin{proof}[Proof of Lemma \ref{LyapGene}]
We begin by choosing a countable basis of closed sets of the torus: for example we can take $\mathcal K_N$ the set of unions of closed squares whose vertices are multiples of $2^{-N}$. We also denote by $\mathcal B$ the set of all topological balls of $\T^2$. We can now define $E_{k,\varep,N}$ as the set of homeomorphisms such that each large enough strictly periodic ball contains a smaller strictly periodic ball with the same period:
\[E_{k,\varep,N} = \left\{ f\in\Hom(\T^2)\ \middle\vert\ \begin{array}{l}
 \forall K\in\mathcal K_N\cap\mathcal B \text{ s.t. } \exists i\le k \text{ s.t. }\\
 f^i(K) \subset\subset K \text{ and } \operatorname{diam}_{int}(K)>\varep,\\
 \exists K'\subset K, K'\in\mathcal B \text{ s.t.\,} \operatorname{diam}(K')<\varep/2\\
 \text{and } f^i(K') \subset\subset K'
\end{array}\right\}.\]
Then for every $k,\varep,N$, it is straightforward that the set $E_{k,\varep,N}$ is an open subset of $\Hom(X)$. To show that it is dense it suffices to apply Brouwer's theorem to each $K$ such that $f^i(K) \subset\subset K$ and to make the periodic point attractive.

First of all, remark that for every topological ball $K$ with nonempty interior which is strictly $i$-periodic, there exits $N\in\N$ and a smaller topological ball $\tilde K\subset K$ which is strictly $i$ periodic such that $\tilde K \in \mathcal K_N$. It implies that if $f$ belongs to the $G_\delta$ dense set $\bigcap_{k,\varep,N} E_{k,\varep,N}$, then for every topological ball $K$ with nonempty interior which is strictly $i$-periodic, there exits $N\in\N$ and a twice smaller topological ball $K'\subset K$ which is strictly $i$ periodic. Taking the intersection of such balls, we obtain a periodic point with period $i$ which is Lyapunov stable by construction.
\end{proof}

We now return to the proof of Proposition \ref{PointDissip}. Let $f$ be a generic dissipative homeomorphism, $\mu\in \Obs(f)$ and $\varep>0$. By hypothesis $\Leb(A_\varep(\mu))>0$ (see Equation \eqref{defAep}), then $\varep' = \frac12 \min(\varep, \Leb(A_\varep(\mu)))>0$. As $f$ is generic it satisfies the conclusions of the shredding lemma of F. Abdenur and M. Andersson (see \cite{MR3027586}) applied to $f$ and $\varep'$, in particular there exists $B\subset A_\varep(\mu)$ and $O\subset \T^2$ such that:
\begin{itemize}
\item $\Leb (B) > 0$,
\item $O$ is a periodic open set: $\exists i>0 : f^i(O)\subset\subset O$,
\item $\diam (O)<\varep'$,
\item every orbit of every point of $B$ belongs to $O$ eventually.
\end{itemize}
By Lemma \ref{LyapGene}, $O$ contains a Lyapunov stable periodic point whose orbit is denoted by $\omega$; thus for every $x\in B$ and every $\nu\in p\omega(x)$ one has $\dist(\nu,\delta_\omega)<\varep'$. But by hypothesis $\dist(\nu,\mu)<\varep$, then $\dist(\mu, \delta_\omega)<2\varep$, with $\omega$ a Lyapunov stable periodic orbit.
\end{proof}

\begin{lemme}\label{PointCons}
If $f$ is generic among $\Hom(\T^2,\Leb)$, then $\Obs(f) = \{\Leb\}$ coincide with the set of physical measures.
\end{lemme}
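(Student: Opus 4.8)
The plan is to reduce everything to ergodicity of the generic conservative homeomorphism. By the Oxtoby--Ulam theorem (see \cite{Oxto-meas}), the set of ergodic elements of $\Hom(\T^2,\Leb)$ is generic, so I may assume that $f$ is ergodic with respect to Lebesgue measure. The entire argument then rests on Birkhoff's ergodic theorem.

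First I would establish that $p\omega(x)=\{\Leb\}$ for Lebesgue-almost every $x$. Fix a countable dense family $(\varphi_j)_{j\in\N}$ in $C(\T^2)$. For each $j$, Birkhoff's theorem applied to the ergodic measure $\Leb$ provides a full-measure set on which $\frac1n\sum_{k=0}^{n-1}\varphi_j(f^k(x))$ converges to $\int_{\T^2}\varphi_j\,\ud\Leb$. Intersecting these countably many full-measure sets yields a single set $X_0$ of full Lebesgue measure such that, for every $x\in X_0$ and every $j$, the Birkhoff averages of $\varphi_j$ converge to $\int_{\T^2}\varphi_j\,\ud\Leb$. Since the $\varphi_j$ are dense in $C(\T^2)$ and the weak-$*$ topology on $\Pb$ is metrized by $\dist$, this forces the empirical measures $\frac1n\sum_{k=0}^{n-1}\delta_{f^k(x)}$ to converge weak-$*$ to $\Leb$ for every $x\in X_0$; hence $p\omega(x)=\{\Leb\}$ on $X_0$.

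This immediately gives one direction: for every $\varep>0$ the set $A_\varep(\Leb)$ contains $X_0$, so $\Leb(A_\varep(\Leb))=1>0$ and $\Leb$ is observable; moreover its basin has full measure, so $\Leb$ is a physical measure. For the converse, suppose $\mu\in\Obs(f)$ with $\mu\neq\Leb$, and set $d_0=\dist(\mu,\Leb)>0$. Choosing $\varep=d_0/2$, any $x\in A_\varep(\mu)$ admits some $\nu\in p\omega(x)$ with $\dist(\nu,\mu)<\varep$; but for $x\in X_0$ the only element of $p\omega(x)$ is $\Leb$, which satisfies $\dist(\Leb,\mu)=d_0>\varep$. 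Hence $A_\varep(\mu)\cap X_0=\emptyset$, so $\Leb(A_\varep(\mu))\le\Leb(\T^2\setminus X_0)=0$, contradicting the definition of an observable measure. Therefore $\Obs(f)=\{\Leb\}$, and the same null-basin argument shows that $\Leb$ is the unique physical measure.

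The only delicate point is the passage from convergence of Birkhoff averages for a countable dense family of test functions to full weak-$*$ convergence of the empirical measures on a single full-measure set, which is exactly the standard separability/metrizability argument recalled above. Once the genericity of ergodicity is granted, the remainder is a direct application of Birkhoff's theorem, and I do not expect any further obstacle.
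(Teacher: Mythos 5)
Your proof is correct and follows essentially the same route as the paper: reduce to the Oxtoby--Ulam theorem, which gives ergodicity of Lebesgue measure for a generic conservative homeomorphism, and then conclude that $\Obs(f)=\{\Leb\}$. The only difference is that the paper cites Remark~1.8 of \cite{MR2852870} for the implication ``$\Leb$ ergodic $\Rightarrow \Obs(f)=\{\Leb\}$'', whereas you supply the standard Birkhoff/separability argument for it directly; both are fine.
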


\begin{proof}[Proof of Lemma \ref{PointCons}]
A classical theorem of J. Oxtoby and S. Ulam \cite{Oxto-meas} states that a generic conservative homeomorphism $f\in \Hom(\T^2,\Leb)$ is ergodic with respect to Lebesgue measure. But Remark 1.8 of \cite{MR2852870} states that if Lebesgue measure is ergodic, then $\Obs(f) = \{\Leb\}$.
\end{proof}

\section{Observable rotation sets}

\subsection{Definition}

As said before, from the notion of observable measure it is easy to define a notion of observable ergodic rotation set. Another definition, more topologic, seemed reasonable to us for observable rotation sets:

\begin{definition}
\[\rho^{obs}(F) = \Big\{ v\in\R^2 \mid \forall \varep>0,\, \Leb\big\{x\mid \exists u\in \overline\rho (x) : d(u,v)<\varep\big\}>0\Big\}.\]
\[\rho^{obs}_{mes}(F) = \left\{\int_{\T^2} D(F)(x) \ud \mu(x) \mid \mu\in \Obs(f)\right\}.\]
\end{definition}

These two sets are nonempty compact subsets of the classical rotation set, and the first one is even a subset of $\rho_{pts}(F)$. The next lemma states that these two definitions coincide:

\begin{lemme}\label{equal}
$\rho^{obs}_{mes}(F) = \rho^{obs}(F)$.
\end{lemme}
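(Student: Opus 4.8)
The plan is to reduce both sets to statements about the sets $p\omega(x)$ and the affine ``rotation vector of a measure'' map $\mathcal R(\mu)=\int_{\T^2}D(F)\,\ud\mu$, and then to prove the two inclusions separately, the second being the delicate one. First I would record the elementary but crucial identity linking the two descriptions of rotation vectors. Writing $\tilde x_k=F^k(\tilde x)$, the cocycle relation $D(F)(f^k(x))=\tilde x_{k+1}-\tilde x_k$ telescopes to give $\frac1n(F^n(\tilde x)-\tilde x)=\mathcal R\big(\frac1n\sum_{k=0}^{n-1}\delta_{f^k(x)}\big)$. Since $D(F)$ is continuous, $\mathcal R$ is continuous for the weak-$*$ topology; combined with compactness of $\Pb$ this yields $\overline\rho(x)=\mathcal R(p\omega(x))$ for every $x$. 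Consequently the set appearing in the definition of $\rho^{obs}(F)$ can be rewritten, for each $v$ and each $\varep$, as $G_\varep(v)=\{x\mid\exists\nu\in p\omega(x):|\mathcal R(\nu)-v|<\varep\}$, and $v\in\rho^{obs}(F)$ means precisely $\Leb(G_\varep(v))>0$ for all $\varep$. This places $\rho^{obs}$ and $\Obs$ (hence $\rho^{obs}_{mes}$) on the same footing, since $A_\varep(\mu)=\{x\mid\exists\nu\in p\omega(x):\dist(\nu,\mu)<\varep\}$.

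For the inclusion $\rho^{obs}_{mes}(F)\subseteq\rho^{obs}(F)$ I would argue by continuity. Given $\mu\in\Obs(f)$ and $v=\mathcal R(\mu)$, continuity of $\mathcal R$ at $\mu$ provides, for each $\varep>0$, some $\delta>0$ with $\dist(\nu,\mu)<\delta\Rightarrow|\mathcal R(\nu)-v|<\varep$; hence $A_\delta(\mu)\subseteq G_\varep(v)$, and positivity of $\Leb(A_\delta(\mu))$ (observability of $\mu$) forces $\Leb(G_\varep(v))>0$. As $\varep>0$ is arbitrary, $v\in\rho^{obs}(F)$.

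The reverse inclusion is the heart of the matter and the step I expect to be the main obstacle: from the merely scale-by-scale information ``$\Leb(G_\varep(v))>0$ for all $\varep$'' I must manufacture a \emph{single} measure that is observable at all scales and has rotation vector exactly $v$. The exact level set $\{\mathcal R=v\}$ may well meet no orbit-closure on a positive Lebesgue set, so nothing can be applied to it directly. Instead I would prove a ``nonempty support'' lemma for the monotone, finitely subadditive set function $\Lambda(S)=\Leb\{x\mid p\omega(x)\cap S\neq\emptyset\}$ (outer Lebesgue measure can be used if one prefers to avoid measurability questions): if $S\subseteq\mathcal M(f)$ is compact and $\Lambda(S)>0$, then there is $\mu\in S$ with $\Lambda(B(\mu,r))>0$ for every $r>0$, i.e.\ $\mu$ is observable. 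Its proof is a compactness argument: were every $\mu\in S$ to admit a radius $r_\mu$ with $\Lambda(B(\mu,r_\mu)\cap S)=0$, a finite subcover of $S$ together with subadditivity of $\Lambda$ would give $\Lambda(S)=0$, a contradiction.

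Applying this lemma to the compact set $\overline{W_\varep}=\{\nu\in\mathcal M(f):|\mathcal R(\nu)-v|\le\varep\}$, whose $\Lambda$-value is at least $\Leb(G_\varep(v))>0$, produces an observable measure $\mu_\varep\in\overline{W_\varep}$, so that $|\mathcal R(\mu_\varep)-v|\le\varep$. Finally, taking $\varep=1/n$ and using compactness of $\Obs(f)$ I extract a convergent subsequence $\mu_{1/n_k}\to\mu\in\Obs(f)$; continuity of $\mathcal R$ then gives $\mathcal R(\mu)=v$, whence $v\in\rho^{obs}_{mes}(F)$. This completes the equality $\rho^{obs}_{mes}(F)=\rho^{obs}(F)$.
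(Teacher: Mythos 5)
Your proof is correct. It rests on the same underlying mechanism as the paper's proof of the hard inclusion --- cover a compact set of measures by finitely many balls and use finite subadditivity of (outer) Lebesgue measure --- but it decomposes the argument differently. The paper runs a single contradiction argument over all of $\Pb$, using the refined limit sets $p\omega_\varep^v(x)$ (limit points of the empirical measures taken only along subsequences that simultaneously realize the rotation vector $v$ up to $\varep$) together with their monotonicity in $\varep$, and extracts in one step a measure $\mu_0$ that is observable at every scale and satisfies $\int_{\T^2} D(F)\,\ud\mu_0=v$. You instead isolate a clean local lemma --- any compact set $S$ of measures with $\Leb\{x\mid p\omega(x)\cap S\neq\emptyset\}>0$ contains an observable measure --- apply it to the slab $\{\nu\in\mathcal M(f)\mid |\int D(F)\,\ud\nu-v|\le\varep\}$ to produce observable measures $\mu_\varep$ with approximately correct rotation vectors, and then pass to the limit using the compactness of $\Obs(f)$, an input from Catsigeras--Enrich that the paper's one-shot argument does not need (though the closedness of $\Obs(f)$ is easy to check directly, so this is a harmless dependency). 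Your route is more modular, the lemma being essentially the Catsigeras--Enrich existence argument localized to $S$, at the price of the extra limiting step; it also avoids the bookkeeping of the sets $p\omega_\varep^v(x)$ entirely by exploiting the identity $\overline\rho(x)=\{\int D(F)\,\ud\nu \mid \nu\in p\omega(x)\}$, and your treatment of the easy inclusion is slightly more careful than the paper's, since you make explicit the modulus of continuity of $\mu\mapsto\int D(F)\,\ud\mu$ where the paper passes from $\dist(\nu,\mu)<\varep$ to $|\int D(F)\,\ud\nu-\int D(F)\,\ud\mu|<\varep$ with the same $\varep$.
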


\begin{proof}[Proof of Lemma \ref{equal}]
We first prove that $\rho^{obs}_{mes}(F) \subset \rho^{obs}(F)$. Let $v\in \rho^{obs}_{mes}(F)$ and $\varep>0$. Then there exists $\mu\in \Obs(f)$ such that $v = \int_{\T^2} D(F) \ud \mu$, in particular $\Leb(A_{\varep/2}(\mu))>0$. But if $x\in A_{\varep/2}(\mu)$, then there exists a strictly increasing sequence of integers $(n_i(x))_i$ such that for every $i\ge 0$,
\[\dist \left(\frac{1}{n_i(x)}\sum_{k=0}^{n_i(x)-1} \delta_{f^k(x)},\mu\right)<\varep.\]
Thus,
\[\left|\frac{1}{n_i(x)}\sum_{k=0}^{n_i(x)-1} D(F)(f^k(x)) - \int_{\T^2} D(F) \ud \mu\right|<\varep,\]
in other words the inequality
\[\left|\frac{F^{n_i(x)}(x)-x}{n_i(x)} - v\right|<\varep\]
holds for every $i$ and on a Lebesgue positive measure set of points $x$.
\medskip

For the other inclusion, let $v\in \rho^{obs}(F)$ and set
\[\tilde A_\varep(v) = \{x\in \T^2 \mid \exists u\in\overline\rho(x) : d(u,v)<\varep\}.\]
By hypothesis, $\Leb(\tilde A_\varep(v))>0$ for every $\varep>0$. To each $x\in \tilde A_\varep(v)$ we associate the set $p\omega_\varep^v(x)$ of limit points of the sequence of measures 
\[\frac{1}{n_i(x)}\sum_{k=0}^{n_i(x)-1} \delta_{f^k(x)},\]
where  $(n_i(x))_i$ is a strictly increasing sequence such that
\[\left| \frac{F^{n_i(x)}(x)-x}{n_i(x)}-v\right| <\varep.\]
By compactness of $\Pb$, the set $p\omega_\varep^v(x)$ is nonempty and compact. In the sequel we will use the following easy remark: if $0<\varep<\varep'$ and $x\in \tilde A_\varep$, then $p\omega_\varep^v(x)\subset p\omega_{\varep'}^v(x)$.

By contradiction, suppose that for every $\mu\in\Pb$, there exists $\varep_\mu>0$ such that
\[\Leb\big\{ x\in \tilde A_{\varep_\mu}(v) \mid \exists \nu\in p\omega_{\varep_\mu}^v(x) : \dist(\nu,\mu)<\varep_\mu \big\} =0.\]
By compactness, $\Pb$ is covered by a finite number of balls $B(\mu_j,\varep_{\mu_j})$. Taking $\varep = \min \varep_{\mu_j}$, for every $j$ one has
\[\Leb\big\{ x\in \tilde A_\varep(v) \mid \exists\nu\in p\omega_\varep^v(x) : \dist(\nu,\mu)<\varep_{\mu_j}\big\} =0,\]
thus, as balls $B(\mu_j,\varep_{\mu_j})$ cover $\Pb$,
\[\Leb\big\{ x\in \tilde A_\varep(v) \mid p\omega_\varep^v(x)\cap \Pb\neq\emptyset \big\} =0,\]
which is a contradiction.

Therefore, there exists $\mu_0\in\Pb$ such that for every $\varep>0$,
\[\Leb\big\{ x\in \tilde A_{\varep}(v) \mid \exists\nu\in p\omega_{\varep}^v(x) : \dist(\nu,\mu_0)<\varep \big\} > 0,\]
in particular $\mu_0\in\Obs(f)$. Furthermore, for $\varep>0$, there exists $x\in \tilde A_{\varep}(v)$ and $\mu_x\in p\omega_\varep^v(x)$ such that $\dist(\mu_x,\mu_0)<\varep$. As $\mu_x\in p\omega_\varep^v(x)$, there exists a sequence $(n_i(x))_i$ such that
\[\dist\left(\mu_x\,,\, \frac{1}{n_i(x)}\sum_{k=0}^{n_i(x)-1} \delta_{f^k(x)}\right)<\varep \qquad \text{and} \qquad \left|\frac{F^{n_i(x)}(x)-x}{n_i(x)} - v\right| < \varep.\]
Thus,
\[\dist \left(\mu_0\,,\, \frac{1}{n_i(x)}\sum_{k=0}^{n_i(x)-1} \delta_{f^k(x)}\right)<2\varep.\]
Integrating this estimation according to the function $D(F)$, we obtain:
\[\left|\int_{\T^2} D(F)\ud\mu_0 - \frac{F^{n_i(x)}(x)-x}{n_i(x)}\right| < 2\varep,\]
so
\[\left|\int_{\T^2} D(F)\ud\mu_0 - v\right| < 3\varep,\]
for every $\varep>0$, in other words,
\[v = \int_{\T^2} D(F)\ud\mu_0.\]
\end{proof}

\subsection{Properties of the observable rotation set}

We begin by giving two lemmas which state the dynamical behaviour of the observable rotation sets.

\begin{lemme}\label{iter}
For every $q\in \N$, $\rho^{obs}(F^q) = q\rho^{obs}(F)$.
\end{lemme}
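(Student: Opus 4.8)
The plan is to reduce the statement to a pointwise identity between the rotation vectors of $F$ and of $F^q$, and then transfer it to the observable rotation sets by a formal manipulation of the defining condition. For this proof, write $\overline\rho_F(\tilde x)$ and $\overline\rho_{F^q}(\tilde x)$ for the sets of limit points of $\frac{F^n(\tilde x)-\tilde x}{n}$ and of $\frac{(F^q)^m(\tilde x)-\tilde x}{m}$ respectively; recall that, since $\frac{F^n(\tilde x)-\tilde x}{n}=\frac{1}{n}\sum_{k=0}^{n-1}D(F)(f^k(x))$, these sets do not depend on the chosen lift.

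First I would establish the pointwise identity $\overline\rho_{F^q}(\tilde x)=q\,\overline\rho_F(\tilde x)$ for every $\tilde x\in\R^2$. One inclusion is immediate: since $\frac{(F^q)^m(\tilde x)-\tilde x}{m}=q\cdot\frac{F^{qm}(\tilde x)-\tilde x}{qm}$ and the sequence $\frac{F^{qm}(\tilde x)-\tilde x}{qm}$ is a subsequence of $\frac{F^n(\tilde x)-\tilde x}{n}$, any limit point $u$ of the $F^q$-sequence satisfies $u/q\in\overline\rho_F(\tilde x)$, whence $\overline\rho_{F^q}(\tilde x)\subset q\,\overline\rho_F(\tilde x)$. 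The reverse inclusion is the only point requiring an argument, and it rests on the fact that $D(F)$ is continuous on the compact torus, hence bounded: set $C=\sup_{x}|D(F)(x)|$, so that $|F^{k+1}(\tilde x)-F^k(\tilde x)|\le C$ for all $k$. Writing an arbitrary integer $n$ as $n=qm+r$ with $0\le r<q$ gives $|F^n(\tilde x)-F^{qm}(\tilde x)|\le (q-1)C$, and therefore
\[\left|\frac{F^n(\tilde x)-\tilde x}{n}-\frac{F^{qm}(\tilde x)-\tilde x}{qm}\right|\le \frac{(q-1)C}{n}+\left|1-\frac{qm}{n}\right|\,\frac{|F^{qm}(\tilde x)-\tilde x|}{qm}\xrightarrow[n\to\infty]{}0,\]
since $qm/n\to 1$ and $|F^{qm}(\tilde x)-\tilde x|/(qm)\le C$ stays bounded. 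Thus the full sequence $\frac{F^n(\tilde x)-\tilde x}{n}$ and its subsequence indexed by multiples of $q$ have exactly the same limit points, so $\overline\rho_F(\tilde x)$ coincides with the set of limit points of $\frac{F^{qm}(\tilde x)-\tilde x}{qm}$. Multiplying by $q$ gives $q\,\overline\rho_F(\tilde x)\subset\overline\rho_{F^q}(\tilde x)$, completing the identity.

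With the pointwise identity in hand, I would transfer it to the observable rotation sets formally. Fix $v\in\R^2$. Using $\overline\rho_{F^q}(x)=q\,\overline\rho_F(x)$ and the relation $d(qu',v)=q\,d(u',v/q)$, for each $\varep>0$ the existence of $u\in\overline\rho_{F^q}(x)$ with $d(u,v)<\varep$ is equivalent to the existence of $u'\in\overline\rho_F(x)$ with $d(u',v/q)<\varep/q$; hence
\[\Big\{x\mid \exists u\in\overline\rho_{F^q}(x):d(u,v)<\varep\Big\}=\Big\{x\mid \exists u'\in\overline\rho_F(x):d(u',v/q)<\varep/q\Big\}.\]
Since $\varep\mapsto\varep/q$ is a bijection of $(0,\infty)$ onto itself, the requirement that this set have positive Lebesgue measure for every $\varep>0$ is the same for both descriptions. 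Therefore $v\in\rho^{obs}(F^q)$ if and only if $v/q\in\rho^{obs}(F)$, which is precisely $\rho^{obs}(F^q)=q\,\rho^{obs}(F)$.

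I expect the only genuine difficulty to be the reverse inclusion in the pointwise identity: a priori a rotation vector of $F$ is realized along an arbitrary subsequence $(n_i)$, and one must promote it to a subsequence of multiples of $q$ in order to read it as a rotation vector of $F^q$. The bounded-displacement estimate above is exactly what removes this obstruction, showing that restricting to indices divisible by $q$ changes neither the set of limit points nor their values.
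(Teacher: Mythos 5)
Your proof is correct and follows essentially the same route as the paper's: the paper's argument is exactly the one-line remark that $\overline\rho_{F^q}(\tilde x)=q\,\overline\rho_F(\tilde x)$, with one inclusion trivial and the other obtained by Euclidean division, which is precisely the decomposition $n=qm+r$ and bounded-displacement estimate you carry out in detail. Your additional step transferring the pointwise identity to the sets $\rho^{obs}$ via the rescaling $\varep\mapsto\varep/q$ is the implicit formal part the paper leaves to the reader, and it is handled correctly.
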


\begin{proof}[Proof of Lemma \ref{iter}]
It suffices to remark that $\overline\rho_{F^q}(x) = q\overline\rho_{F}(x)$ (one inclusion is trivial and the other is easily obtained by Euclidean division).
\end{proof}

\begin{rem}
In general $\rho^{obs}(F^{-1}) \neq -\rho^{obs}(F)$: see for instance the point \ref{F-1} of Example \ref{exrot}.
\end{rem}

\begin{lemme}\label{Dynamic}
If $H$ is a homeomorphism of $\R^2$ commuting with integral translations, then $\rho^{obs}(H\circ F\circ H^{-1}) = \rho^{obs}(F)$.
\end{lemme}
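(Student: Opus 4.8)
The plan is to transport the problem to the measure-theoretic description $\rho^{obs}_{mes}$, which coincides with $\rho^{obs}$ by Lemma \ref{equal}; this detour is precisely what makes the argument go through. Since $H$ commutes with integral translations it is the lift of a homeomorphism $h$ of $\T^2$ homotopic to the identity, and one checks at once that $G := H\circ F\circ H^{-1}$ again commutes with integral translations, hence is a lift of $g := h\circ f\circ h^{-1}$. I would first note why a direct attack on the topological definition stalls: one has $\overline\rho_G(h(x)) = \overline\rho_F(x)$ for every $x$ (the two displacement sequences differ by $\tfrac1n\big(\psi(F^n(\tilde x))-\psi(\tilde x)\big)\to 0$, where $\psi(z)=H(z)-z$ is bounded), so the set $\{y\mid\exists u\in\overline\rho_G(y):d(u,v)<\varep\}$ is exactly the $h$-image of the corresponding set for $F$. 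But a homeomorphism of the torus need not preserve Lebesgue-null sets, so one cannot conclude that one of these sets has positive measure iff the other does. This is the main obstacle, and it is exactly why one must pass through observable measures rather than physical ones: observability is conjugacy-invariant (Remark \ref{remConj}), whereas preservation of positive measure is not available.

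So the first real step is to record the conjugacy behaviour of observable measures. Since $p\omega_g(h(x)) = h_*\,p\omega_f(x)$ for every $x$ (the empirical measures satisfy $\frac1n\sum_{k=0}^{n-1}\delta_{g^k(h(x))} = h_*\big(\frac1n\sum_{k=0}^{n-1}\delta_{f^k(x)}\big)$ and $h_*$ is a weak-$*$ homeomorphism of $\Pb$), Remark \ref{remConj} applied to $h$ yields $h_*\Obs(f)\subset\Obs(g)$, and applied to $h^{-1}$ and $g$ yields $\Obs(g)\subset h_*\Obs(f)$; hence $\Obs(g) = h_*\Obs(f)$, where $h_*\mu$ is the pushforward (the measure written $h^*\mu$ in Remark \ref{remConj}).

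The second step is a displacement computation showing that conjugation changes $D(F)$ only by a coboundary. The map $\psi\colon z\mapsto H(z)-z$ is $\Z^2$-periodic by hypothesis, hence descends to a bounded continuous $\bar\psi\colon\T^2\to\R^2$. Using $H(\tilde x)$ as a lift of $h(x)$, a short computation gives
\[D(G)(h(x)) - D(F)(x) = \psi(F(\tilde x)) - \psi(\tilde x) = \bar\psi(f(x)) - \bar\psi(x).\]
Therefore, for every $f$-invariant measure $\mu$,
\[\int_{\T^2} D(G)\,\ud(h_*\mu) = \int_{\T^2} D(G)\circ h\,\ud\mu = \int_{\T^2} D(F)\,\ud\mu + \int_{\T^2}\big(\bar\psi\circ f-\bar\psi\big)\,\ud\mu = \int_{\T^2} D(F)\,\ud\mu,\]
the coboundary integral vanishing by invariance of $\mu$.

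Combining the two steps finishes the proof: as $\mu$ ranges over $\Obs(f)$, the measure $h_*\mu$ ranges over all of $\Obs(g)$, while the value $\int D(G)\,\ud(h_*\mu)$ equals $\int D(F)\,\ud\mu$; hence $\rho^{obs}_{mes}(G) = \{\int_{\T^2} D(F)\,\ud\mu\mid\mu\in\Obs(f)\} = \rho^{obs}_{mes}(F)$, and Lemma \ref{equal} upgrades this equality to $\rho^{obs}(G)=\rho^{obs}(F)$. The only genuine difficulty is the one flagged above, the possible destruction of null sets by $h$, and it is circumvented entirely by working with $\rho^{obs}_{mes}$ and invoking the conjugacy-invariance of observable measures.
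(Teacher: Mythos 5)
Your proof is correct and follows the same route as the paper, whose entire proof is the one-line observation that observability of measures is invariant under topological conjugacy (Remark \ref{remConj}); you simply supply the details (the identity $\Obs(g)=h_*\Obs(f)$ and the coboundary computation showing $\int_{\T^2} D(G)\,\ud(h_*\mu)=\int_{\T^2} D(F)\,\ud\mu$ for $f$-invariant $\mu$). Note only that the null-set difficulty you flag is not so much circumvented as absorbed into Remark \ref{remConj}, which the paper asserts without proof.
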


\begin{proof}[Proof of Lemma \ref{Dynamic}]
It follows easily from the fact that the notion of observable measure is stable by conjugacy (see Remark \ref{remConj}).
\end{proof}

We now give a few simple examples of calculation of observable rotation sets.

\begin{ex}\label{exrot}
\begin{enumerate}
\item If $f = \operatorname{Id}$, then $\rho^{obs}(F) = \{(0,0)\}$.
\item If
\[F(x,y) = \left( x+\cos(2\pi x)\, ,\, y\right),\]
then $\rho_{pts}(F) = \rho^{obs}(F) = [-1,1]\times \{0\}$.
\item\label{F-1} If
\[F(x,y) = \left( x+\cos(2\pi y)\, ,\, y+\frac{1}{100}\sin(2\pi y)\right),\]
then $\rho_{pts}(F) = \{(0,-1),(0,1)\}$, but $\rho^{obs}(F) = \{(0,-1)\}$ and $\rho^{obs}(F^{-1}) = \{(0,1)\}$.
\item Let $P$ be a convex polygon with rational vertices. In \cite{MR1176627}, J. Kwapisz
has constructed an axiom A diffeomorphism $f$ of $\T^2$ whose rotation set is the
polygon $P$. It is possible to modify slightly Kwapisz's construction so that all the sinks of
$f$ are fixed points, and so that the union of the basins of these sinks have
full Lebesgue measure. Hence, the observable rotation set of $f_P$ is reduced to
$\{(0,0)\}$.
\end{enumerate}
\end{ex}
\bigskip

We now give the results about the relationship between the rotation set and the observable rotation set in the generic setting. We begin by the dissipative case:

\begin{prop}\label{GeneDissip}
If $f$ is generic among $\Hom(\T^2)$, then $\rho(F) = conv(\rho^{obs}(F))$. If moreover $f$ is generic with a nonempty interior rotation set, then $\rho(F) = \rho^{obs}(F)$.
\end{prop}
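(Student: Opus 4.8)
The plan is to prove the two inclusions $conv(\rho^{obs}(F)) \subseteq \rho(F)$ and $\rho(F) \subseteq conv(\rho^{obs}(F))$, the first being immediate and the second carrying all the weight. For the first inclusion I would use Lemma \ref{equal} to identify $\rho^{obs}(F)$ with $\rho^{obs}_{mes}(F)$: every $v\in\rho^{obs}(F)$ is of the form $\int_{\T^2} D(F)\,\ud\mu$ for an observable, hence invariant, measure $\mu$, so $v$ lies in the measure rotation set, which equals $\rho(F)$; since $\rho(F)$ is convex, $conv(\rho^{obs}(F))\subseteq\rho(F)$ follows. By Passeggi's Theorem \ref{ThRata} I may assume $f$ lies in the open dense set $\mathcal{O}$ on which $\rho(F)$ is locally constant and equal to a rational polygon $P$; then $P$ is the convex hull of its finitely many rational vertices, so the reverse inclusion reduces to showing that each vertex of $P$ belongs to $\rho^{obs}(F)$.

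The key step is to realize rational rotation vectors by Lyapunov stable periodic orbits. Given a rational vector $v=p/q$ with $p\in\Z^2$, Lemma \ref{iter} gives $\rho^{obs}(F^q)=q\,\rho^{obs}(F)$, and since $T_{-p}\circ F^q$ and $F^q$, where $T_{-p}$ is translation by the integer vector $-p$, are two lifts of the same torus homeomorphism $f^q$, their observable rotation sets differ by an integer translation; hence $v\in\rho^{obs}(F)\iff 0\in\rho^{obs}(T_{-p}\circ F^q)$. It therefore suffices to produce a Lyapunov stable fixed point $x$ of $f^q$ with $F^q(\tilde x)=\tilde x+p$: then $\delta_x$ is observable and $\int D(T_{-p}\circ F^q)\,\ud\delta_x=0$, so $0\in\rho^{obs}_{mes}(T_{-p}\circ F^q)=\rho^{obs}(T_{-p}\circ F^q)$. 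To obtain such a point I would combine a realization theorem---rational vertices, and rational interior points, of the rotation set are rotation vectors of periodic orbits, by the results of Franks and Misiurewicz--Ziemian---with a $C^0$-small perturbation making the realizing orbit attracting, hence contained in an arbitrarily small strictly periodic ball, and then invoke Lemma \ref{LyapGene} and Proposition \ref{PointDissip} to recover a genuinely Lyapunov stable periodic point with the same rotation vector.

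To turn this into a statement about the fixed generic $f$, I would assemble a $G_\delta$ dense set by a Baire argument. For each rational $r$ set $\mathcal{O}_r=\{f\in\mathcal{O}\mid r\text{ is a vertex of }\rho(F)\}$; local constancy of $\rho$ on $\mathcal{O}$ makes $\mathcal{O}_r$ clopen in $\mathcal{O}$. On $\mathcal{O}_r$ the realization step above, followed by an attracting perturbation that (by local constancy) leaves the rotation set unchanged, shows that the set of $f$ for which $r$ is realized by an attracting, hence Lyapunov stable, periodic orbit is dense in $\mathcal{O}_r$; it is open by persistence of strictly periodic balls under $C^0$ perturbations. Intersecting these open dense sets over all rational $r$, together with the $G_\delta$ dense sets furnished by Proposition \ref{PointDissip} and Lemma \ref{LyapGene}, yields a $G_\delta$ dense set on which every vertex of $\rho(F)$ lies in $\rho^{obs}(F)$, whence $\rho(F)=conv(\text{vertices})\subseteq conv(\rho^{obs}(F))$ and the first equality holds. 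For the second statement, when $\rho(F)$ has nonempty interior I would run the same argument with $\mathcal{O}_r$ replaced by $\{f\in\mathcal{O}\mid r\in\operatorname{int}\rho(F)\}$, using Franks' realization of rational interior points; since the rational points are dense in $\operatorname{int}\rho(F)$ and $\rho^{obs}(F)$ is compact, this gives $\rho(F)=\overline{\operatorname{int}\rho(F)}\subseteq\rho^{obs}(F)$, which together with the easy inclusion forces $\rho^{obs}(F)=\rho(F)$ with no convex hull.

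The main obstacle is the robust realization in the second paragraph: one must not merely find some periodic orbit with the prescribed rational rotation vector, but show that generically it can be upgraded to a Lyapunov stable one realizing exactly that vector. This fuses three ingredients that must be made compatible---a realization theorem to produce the orbit, a Brouwer/index argument to enclose it in a strictly periodic ball via a perturbation supported near the orbit, and Passeggi's local constancy to guarantee that this perturbation leaves the rotation set, and hence the status of $r$ as a vertex or interior point, unchanged. I expect the degenerate cases of Passeggi's polygon---a segment or a singleton, where the extreme rational points are hardest to realize---to demand the most care.
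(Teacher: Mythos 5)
Your proposal is correct and follows essentially the same route as the paper: Passeggi's Theorem \ref{ThRata} to reduce to a rational polygon, Franks' realization theorems for vertices (resp.\ rational interior points), a perturbation making the realizing orbits attracting, Lemma \ref{LyapGene} to upgrade to Lyapunov stable periodic points, and a Baire argument to assemble the generic set. Your bookkeeping differs only cosmetically (indexing the open dense sets by rational vectors $r$ with clopen sets $\mathcal{O}_r$ rather than by finite $\varepsilon$-dense subsets $R_\varepsilon$ as the paper does), and the detour through $T_{-p}\circ F^q$ is an unnecessary but harmless way of saying that a Lyapunov stable periodic orbit yields an observable Dirac measure realizing its rotation vector.
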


\begin{lemme}\label{ExGeneDissip}
If $f$ is generic among $\Hom(\T^2)$ (more precisely, on an open dense subset of $\Hom(\T^2)$), then
\[\rho^{obs}(F) = \operatorname{Cl}\{\overline\rho(\tilde x)\mid x\text{ is a Lyapunov stable periodic point}\}.\]
\end{lemme}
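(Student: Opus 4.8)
The plan is to transfer the measure-level statement of Proposition \ref{PointDissip} down to the level of rotation vectors through the integration map, using Lemma \ref{equal} to identify the two definitions of the observable rotation set. Introduce the map $I\colon\Pb\to\R^2$ defined by $I(\mu)=\int_{\T^2}D(F)\,\ud\mu$. Since $D(F)$ is continuous and bounded on $\T^2$, the map $I$ is affine and continuous for the weak-* topology. By definition $\rho^{obs}_{mes}(F)=I(\Obs(f))$, and by Lemma \ref{equal} this equals $\rho^{obs}(F)$, so everything reduces to computing the continuous image $I(\Obs(f))$.

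The key computation is that $I$ identifies periodic orbit measures with periodic rotation vectors. If $\omega$ is a Lyapunov stable periodic orbit of period $q$ and $x\in\omega$, then choosing lifts so that $\widetilde{f^k(x)}=F^k(\tilde x)$ the sum telescopes:
\[I(\delta_\omega)=\frac1q\sum_{k=0}^{q-1}D(F)(f^k(x))=\frac{F^q(\tilde x)-\tilde x}{q},\]
which is precisely the unique element of $\overline\rho(\tilde x)$ for the periodic point $x$. Hence $I$ maps the set $S=\{\delta_\omega\mid \omega\text{ Lyapunov stable periodic orbit}\}$ onto $\{\overline\rho(\tilde x)\mid x\text{ Lyapunov stable periodic point}\}$.

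Now apply Proposition \ref{PointDissip}, which gives for generic $f$ that $\Obs(f)=\operatorname{Cl}(S)$, a closed (hence compact) subset of the compact space $\Pb$. For a continuous map with compact domain-closure the image of the closure is the closure of the image, $I(\operatorname{Cl}(S))=\operatorname{Cl}(I(S))$: continuity gives $I(\operatorname{Cl}(S))\subset\operatorname{Cl}(I(S))$, while $I(\operatorname{Cl}(S))$ is compact, hence closed, and contains $I(S)$, giving the reverse inclusion. Stringing the three steps together yields $\rho^{obs}(F)=I(\Obs(f))=\operatorname{Cl}(I(S))=\operatorname{Cl}\{\overline\rho(\tilde x)\mid x\text{ Lyapunov stable periodic point}\}$, which is the desired identity.

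The main obstacle is the open-dense refinement. The argument above is clean but only delivers the conclusion on the $G_\delta$ dense set furnished by Proposition \ref{PointDissip} (itself built from the shredding lemma and Lemma \ref{LyapGene}). To upgrade to an \emph{open} dense set as claimed, note first that the inclusion $\operatorname{Cl}\{\overline\rho(\tilde x)\mid x\text{ Lyapunov stable}\}\subset\rho^{obs}(F)$ is elementary and holds for every $f$, since each Lyapunov stable periodic orbit carries an observable measure. The real work is to control the reverse inclusion $\rho^{obs}(F)\subset\operatorname{Cl}\{\overline\rho(\tilde x)\}$ by open conditions; I expect this is done by combining with Passeggi's Theorem \ref{ThRata}, which provides an open dense set on which $\rho(F)$ is a locally constant rational polygon, and then realizing the relevant rotation vectors by finitely many attractive (hence Lyapunov stable) periodic orbits whose small strictly periodic basins persist under perturbation, so that every observable measure is trapped near one of them.
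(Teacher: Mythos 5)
Your proof is correct, but it takes a genuinely different route from the paper's. You derive the lemma formally from Proposition \ref{PointDissip} by pushing the identity $\Obs(f)=\operatorname{Cl}\{\delta_\omega\}$ forward through the continuous affine map $I(\mu)=\int_{\T^2}D(F)\,\ud\mu$, using Lemma \ref{equal} to identify $\rho^{obs}(F)$ with $I(\Obs(f))$ and the compactness of $\Pb$ to commute $I$ with closures; the telescoping computation $I(\delta_\omega)=\frac{F^q(\tilde x)-\tilde x}{q}$ is exactly the right bridge between periodic measures and periodic rotation vectors. The paper instead works directly at the level of rotation vectors: given $v\in\rho^{obs}(F)$ and $\varep>0$, it uses the positive Lebesgue measure of $\{\tilde x\mid d(\overline\rho(\tilde x),v)<\varep\}$ together with the shredding lemma to find such an $x$ whose orbit eventually enters a strictly periodic set $K$ of diameter less than $\varep$, and then invokes Lemma \ref{LyapGene} to produce a Lyapunov stable periodic point in $K$ whose orbit $\varep$-shadows that of $x$, hence has nearly the same rotation vector. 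In effect the paper re-runs the argument of Proposition \ref{PointDissip} with the displacement function in hand, while you reuse that proposition as a black box; your version is shorter and cleanly separates the ergodic content from the bookkeeping, at the cost of being less self-contained. On the ``open dense'' refinement: your caution is warranted, but note that the paper's own proof does not deliver it either --- it assumes the conclusions of the shredding lemma for every $\delta>0$, which is a $G_\delta$ condition, not an open one --- so both arguments really establish the statement on a dense $G_\delta$; your sketch via Theorem \ref{ThRata} is a plausible way to try to strengthen this, but it is not what the paper does.
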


\begin{proof}[Proof of Lemma \ref{ExGeneDissip}]
The inclusion
\[\rho^{obs}(F) \supset \{\overline\rho(\tilde x)\mid x\text{ is a Lyapunov stable periodic point}\}\]
is straightforward. For the other inclusion we need to prove that every observable rotation vector of $F$ can be approximated by rotation vectors of Lyapunov stable periodic points. Let $f$ be a generic homeomorphism (thus satisfying the conclusions of the shredding lemma of F. Abdenur and M. Anderson for every $\delta>0$, see \cite{MR3027586}), $v$ be an observable rotation vector and $\varep>0$. Then $\Leb\{\tilde x\mid d(\overline\rho(\tilde x),v)<\varep\}>0$; in particular by the shredding lemma there exists $x\in\T^2$ such that $d(\overline\rho(\tilde x),v)<\varep$ and that $x$ belongs to a strictly periodic closed set $K$ (\emph{i.e.} there exists $i\in\N^*$ such that $f^i(K)\subset\subset K$) with nonempty interior whose diameter is smaller than $\varep$. By Lemma \ref{LyapGene}, this set $K$ contains a Lyapunov stable periodic point $p$ which $\varep$-shadows the orbit of $x$ eventually. Thus the rotation vectors $v$ and the rotation vector of $p$ are close each other.
\end{proof}

\begin{proof}[Proof of Proposition \ref{GeneDissip}]
Theorem \ref{ThRata} states that for an open dense set of homeomorphisms, the rotation set is a polygon. Then, a theorem of realization of J. Franks \cite[Theorem 3.5]{MR967632} implies that every vertex of this polygon is realized by a periodic point of the homeomorphism, which can be made attractive by a little perturbation of the homeomorphism. Then generically we can find a Lyapunov stable periodic point which shadows the previous periodic point (by Lemma \ref{LyapGene}), in particular it has the same rotation vector. Thus every vertex of $\rho(F)$ belongs to $\rho^{obs}(F)$ and $\rho(F) = conv(\rho^{obs}(F))$.

For the second part of the proposition, another theorem of realization of J. Franks \cite[Theorem 3.2]{MR958891} states that every rational interior point of the polygon is realized as a periodic point of the homeomorphism. For $\varep>0$, one can then find a finite $\varep$-dense subset $R_\varep$ of $\rho(F)$ made of rational points and make the corresponding periodic points of the homeomorphism attractive. Thus, for every $\varep>0$, the set $O_\varep$ made of the homeomorphisms such that every vector of $R_\varep$ is realized by a strictly periodic open subset of $\T^2$ is open and dense in the set of homeomorphisms with nonempty interior rotation set. Applying Lemma \ref{LyapGene} we find a $G_\delta$ dense subset of $O_\varep$ on which every strictly periodic open subset of $\T^2$ contains a Lyapunov stable periodic point; on this set the Hausdorff distance between $\rho(F) = \rho^{obs}(F)$ is smaller than $\varep$. The conclusion of the proposition then easily follows from Baire theorem.
\end{proof}

\begin{rem}
It is not true that $\rho(F) = \rho^{obs}(F)$ holds for a generic homeomorphism: see for instance the point \ref{F-1} of Example \ref{exrot}, where on a neighbourhood of $f$ the set $\rho^{obs}$ is contained in a neighbourhood of the points $(0,-1)$ and $(0,1)$.
\end{rem}

For the conservative case, we recall the result of Proposition \ref{RotGeneCons}: the rotation set of a generic conservative homeomorphism has nonempty interior. The following result states that in this case the observable rotation set is much more smaller, more precisely it consists in a single vector, namely the mean rotation vector.

\begin{prop}\label{ExGeneCons}
If $f$ is generic among $\Hom(\T^2,\Leb)$, then $\rho^{obs}(F) = \{\rho_\Leb(F)\}$, where $\rho_\Leb(F)$ is the mean rotation vector with respect to Lebesgue measure.
\end{prop}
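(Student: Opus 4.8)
The plan is to deduce the statement directly from the two facts already established, namely the identification of the observable rotation set with its measure-theoretic version (Lemma \ref{equal}) and the description of the observable measures of a generic conservative homeomorphism (Lemma \ref{PointCons}). Since both of these do the real work, the proof itself should reduce to an elementary computation.

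First I would invoke Lemma \ref{equal}, which tells us that $\rho^{obs}(F) = \rho^{obs}_{mes}(F)$, so that it is enough to compute the latter. By definition,
\[\rho^{obs}_{mes}(F) = \left\{\int_{\T^2} D(F)(x)\ \ud\mu(x) \mid \mu\in \Obs(f)\right\}.\]
Next I would apply Lemma \ref{PointCons}: for a generic conservative homeomorphism $f$, one has $\Obs(f) = \{\Leb\}$. Substituting this single measure into the formula above immediately gives
\[\rho^{obs}_{mes}(F) = \left\{\int_{\T^2} D(F)(x)\ \ud\Leb(x)\right\} = \{\rho_\Leb(F)\},\]
which is the desired single point, the mean rotation vector. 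Combining the two displays concludes the proof.

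The only subtlety to check is that the genericity used here is compatible: Lemma \ref{PointCons} holds on a $G_\delta$ dense subset of $\Hom(\T^2,\Leb)$ while Lemma \ref{equal} is an identity valid for every $F$, so their conjunction still holds on a $G_\delta$ dense set, and the notation $\rho_\Leb(F)$ coincides with the mean rotation vector $\rho_{mean}(F) = \int_{\T^2} D(F)\,\ud\Leb$ introduced earlier. The genuine mathematical content is entirely hidden in Lemma \ref{PointCons}, which itself rests on the Oxtoby--Ulam theorem (generic conservative homeomorphisms are ergodic) together with the Catsigeras--Enrich observation that ergodicity of Lebesgue measure forces $\Obs(f) = \{\Leb\}$. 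The main obstacle, if any, is thus already resolved upstream, and nothing beyond the above substitution remains to be done here.
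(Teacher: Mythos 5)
Your proposal is correct and follows the same route as the paper, which simply notes that the result is implied by Lemma \ref{PointCons}; you merely make explicit the intermediate use of Lemma \ref{equal} to pass from the topological definition of $\rho^{obs}(F)$ to its measure-theoretic version, which the paper leaves implicit. Nothing is missing.
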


\begin{proof}[Proof of Proposition \ref{ExGeneCons}]
It is easily implied by the fact that Lebesgue measure is the only observable measure (Lemma \ref{PointCons}, which easily follows from Oxtoby-Ulam theorem).
\end{proof}

\section{Discretized rotation sets}\label{DiscRot}

We now define the alternative notion of discretized rotation set. For $N\in\N^*$, we equip the torus $\T^2$ with a \emph{grid of discretization}
\[E_N = \left\{ \left(\frac{i}{2^N},\frac{j}{2^N}\right)\ \middle\vert\ 0\le i,j\le 2^N-1\right\}.\]
We then define the projection $P_N = \T^2\to E_N$ by: for $x\in\T^2$, $P_N(x)$ is (one of) point of $E_N$ which is the nearest to $x$. The \emph{discretization} of a homeomorphism $f : \T^2 \to\T^2$ with respect to the grid $E_N$ is defined as the map $f_N = P_N\circ f : E_N \to E_N$. For each $N$ the map $f_N$ is finite, thus it has a finite number of periodic orbits. To each of these periodic orbits we can associate a rotation vector by applying the same discretization process to the lift $F : \R^2 \to\R^2$ of $f$. The union of these rotation vectors over the periodic points of $f_N$ is denoted by $\rho(F_N)$ and called the \emph{discretized rotation set} on $E_N$. Then the \emph{asymptotic discretized rotation set} is the upper limit of the sets $\rho(F_N)$:
\[\rho^{discr}(F) = \bigcap_{M\in\N}\bigcup_{N\ge M} \rho(F_N).\]

The first result is that for every homeomorphism $f$, the discretized rotation set $\rho(F_N)$ is almost included in the rotation set $\rho(F)$ when $N$ is large enough:

\begin{prop}\label{InclPts}
For every homeomorphism $f$ and every $\varep>0$, it exists $N_0\in\N$ such that for every $N\ge N_0$, one has $\rho(F_N) \subset B(\rho(F),\varep)$, where $B(\rho(F),\varep)$ denotes the set of points whose distance to $\rho(F)$ is smaller than $\varep$. In particular $\rho^{discr}(F)\subset\rho(F)$.
\end{prop}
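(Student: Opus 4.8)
The plan is to argue by contradiction, using the weak-$*$ compactness of $\Pb$ together with the elementary fact that the discretization $f_N$ displaces each point only by the roundoff error, which tends to $0$ with $N$. First I would record the roundoff bound: by construction, for every $x\in\T^2$ one has $\dist(f_N(x),f(x))=\dist(P_N(f(x)),f(x))\le\delta_N$, where $\delta_N:=\sqrt{2}\,2^{-N-1}\to 0$ bounds the mesh of $E_N$; lifting, $|D(F_N)(x)-D(F)(x)|\le\delta_N$ as well. To a periodic orbit $x_0,\dots,x_{q-1}$ of $f_N$ I attach its empirical measure $\mu_N=\frac1q\sum_{k=0}^{q-1}\delta_{x_k}\in\Pb$ and its discretized rotation vector $v_N=\frac1q\sum_{k=0}^{q-1}D(F_N)(x_k)$, for which the roundoff bound gives at once
\[\Big| v_N-\int_{\T^2}D(F)\,\ud\mu_N\Big|\le\delta_N.\]

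Now suppose the conclusion fails: there are $\varep>0$, indices $N_j\to\infty$, and periodic orbits of $f_{N_j}$ whose rotation vectors $v_{N_j}$ satisfy $\dist(v_{N_j},\rho(F))\ge\varep$. I extract from the empirical measures $\mu_{N_j}$ a weak-$*$ convergent subsequence $\mu_{N_j}\to\mu$. The key point, which I expect to be the main obstacle, is to show that the limit $\mu$ is $f$-invariant even though none of the $\mu_{N_j}$ is. For this I would fix a continuous test function $\phi$ and exploit that the orbit is genuinely periodic \emph{for} $f_N$, so $x_{k+1}=P_N(f(x_k))$ lies within $\delta_N$ of $f(x_k)$; uniform continuity of $\phi$ then gives $|\phi(f(x_k))-\phi(x_{k+1})|\le\omega_\phi(\delta_N)$, and since $\sum_k\phi(x_{k+1})=\sum_k\phi(x_k)$ telescopes around the cycle,
\[\Big|\int_{\T^2}\phi\circ f\,\ud\mu_{N_j}-\int_{\T^2}\phi\,\ud\mu_{N_j}\Big|\le\omega_\phi(\delta_{N_j})\to 0.\]
Passing to the limit yields $\int\phi\circ f\,\ud\mu=\int\phi\,\ud\mu$ for every continuous $\phi$, that is, $\mu\in\mathcal{M}(f)$.

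To close the argument, I use that $D(F)$ is continuous, so weak-$*$ convergence gives $\int D(F)\,\ud\mu_{N_j}\to\int D(F)\,\ud\mu$; combined with the roundoff bound this forces $v_{N_j}\to\int D(F)\,\ud\mu$. By the measure description of the rotation set, $\int D(F)\,\ud\mu\in\rho(F)$ because $\mu$ is $f$-invariant, contradicting $\dist(v_{N_j},\rho(F))\ge\varep$. This establishes the uniform inclusion $\rho(F_N)\subset B(\rho(F),\varep)$ for all large $N$. The final assertion is then immediate from the definition of $\rho^{discr}$ as an upper limit: any $v\in\rho^{discr}(F)$ is approximated arbitrarily well by points lying in $\rho(F_N)$ for arbitrarily large $N$, hence lies within $\varep$ of $\rho(F)$ for every $\varep>0$, and therefore belongs to the compact set $\rho(F)$.
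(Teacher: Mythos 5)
Your proof is correct, but it follows a genuinely different route from the paper's. The paper works directly with the displacement definition of $\rho(F)$ as a nested intersection: it first chooses $m$ so that all finite-time displacement rates $\frac{F^m(\tilde x)-\tilde x}{m}$ lie in $B(\rho(F),\varep)$, then takes $N_0$ so that the discretized displacement rates over $m$ steps stay $\varep$-close to the true ones, and finally splits the periodic orbits of $f_N$ into two cases: long orbits (period larger than $m/\varep$) are handled by Euclidean division of the period by $m$, which places their rotation vector in a small neighbourhood of the convex hull of the $m$-step displacement rates; short orbits are handled by a separate compactness/contradiction argument, extracting a subsequence of $f_{N}$-periodic points of a fixed period converging to a genuine $f$-periodic point. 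Your argument replaces this case split by a single weak-$*$ compactness argument on the empirical measures of the discretized periodic orbits, and the step you correctly single out as the crux --- that the limit measure is $f$-invariant even though each orbit is only a $\delta_N$-pseudo-orbit of $f$ --- is resolved cleanly by the telescoping estimate $\bigl|\int\phi\circ f\,\ud\mu_{N_j}-\int\phi\,\ud\mu_{N_j}\bigr|\le\omega_\phi(\delta_{N_j})$. You then conclude via the measure-theoretic characterization $\rho(F)=\{\int_{\T^2}D(F)\,\ud\mu\mid\mu\in\mathcal M(f)\}$, which the paper states in its preliminaries, so nothing outside the paper's toolbox is needed. What your approach buys is a uniform treatment of all periods and a conceptual explanation (periodic orbits of discretizations carry almost-invariant empirical measures); what the paper's approach buys is that it stays entirely at the level of displacement vectors and avoids invoking the measure rotation set. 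Both arguments are sound; your handling of the final assertion about $\rho^{discr}(F)$ (using compactness of $\rho(F)$ to pass from ``within $\varep$ for every $\varep$'' to membership) is also fine.
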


\begin{proof}[Proof of Proposition \ref{InclPts}]
By definition of the rotation set, for $\varep>0$ there exists $m\in\N$ such that
\[\left\{\frac{F^m(\tilde x)-\tilde x}{m}\mid \tilde x\in\R^2\right\} \subset B(\rho(F),\varep).\]
Then there exists $N_0\in\N$ such that for every $N\ge N_0$, 
\[\left|\frac{F^m(\tilde x)-\tilde x}{m} - \frac{F_N^m(\tilde x_N)-\tilde x_N}{m}\right| \le \varep.\]
This allows us to handle the case of long periodic orbits of the discretizations: by euclidean division, each periodic orbit of $f_N$ of length bigger than $m/\varep$ will be in the $\varep$ neighbourhood of the convex hull of the set
\[\frac{F_N^m(\tilde x_N)-\tilde x_N}{m},\]
so in the $3\varep$-neighbourhood of the rotation set $\rho(f)$.

For the small orbits we argue by contradiction: suppose that there exist $\varep>0$ such that for every $N_0\in\N$ there exists $N\ge N_0$ and $x_N\in E_N$ which is periodic under $f_N$ with period smaller than $m/\varep$ and whose associated rotation vector is not in $B(\rho(F),\varep)$. Then up to take subsequences these periodic points $x_N$ have the same period and converge to a periodic point $x\in\T^2$ whose associated rotation vector (for $F$) is not in $B(\rho(F),\varep)$, which is impossible.
\end{proof}

It remains to study the other inclusion. We begin by the dissipative case.

\begin{prop}\label{RotDiscrDissip}
If $f$ is generic among $\Hom(\T^2)$, then $\rho(F_N)$ tends to $\rho(F)$ for the Hausdorff topology. In particular $\rho^{discr}(F) = \rho(F)$.
\end{prop}

\begin{proof}[Proof of Proposition \ref{RotDiscrDissip}]
The fact that the upper limit of $\rho(F_N)$ is included in $\rho(F)$ follows directly from Lemma \ref{InclPts}.

It remains to prove that the lower limit of $\rho(F_N)$ contains $\rho(F)$. First of all the rotation set is the closure of rotation vectors of Lyapunov stable periodic points (Proposition \ref{PointDissip}). To each one of these points we can associate a periodic closed set $K$ with nonempty interior and with period $\tau$ which has the same rotation vector. Then there exists an open set $O\subset K$ such that for $N$ large enough and $x\in K$ we also have $f_N^\tau(x_N) \in O\subset K$. Thus there exists $i\in\N^*$ such that $f_N^{\tau i}(x_N) = f_N^{2\tau i}(x_N)$ and $f_N^{\tau i}(x_N)$ has the same rotation vector as $K$, thus the same rotation vector as the initial Lyapunov stable periodic point.
\end{proof}

For the conservative case, with the same techniques as in \cite{Guih-discr} we can prove the following result:

\begin{lemme}\label{EnsRotDiscrCons}
If $f$ is generic among $\Hom(\T^2,\Leb)$, then for every finite collection of rotation vectors $\{v_1,\cdots,v_n\}$, each one realized by a periodic orbit, there exists a subsequence $f_{N_i}$ of discretizations such that for every~$i$, $\rho_{N_i}(f) = \{v_1,\cdots,v_n\}$.
\end{lemme}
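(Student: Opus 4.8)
The plan is to deduce the statement from Baire's theorem applied to a countable family of open dense sets, indexed by the possible targets. Since rotation vectors of periodic orbits are rational, let $\mathcal V$ denote the countable collection of finite subsets of $\Q^2$. For $V\in\mathcal V$ and $M\in\N$, I would introduce the set $\mathcal U_{V,M}$ of conservative homeomorphisms $g$ for which there is some $N\ge M$ such that $g(E_N)$ avoids the boundaries between the Voronoi cells of the grid and $\rho(F_N)=V$. Away from those boundaries the map $g\mapsto g_N$ is locally constant, hence so is $\rho(F_N)$, and $\mathcal U_{V,M}$ is open. Writing $\mathcal P_V$ for the (open) set of homeomorphisms possessing a persistent periodic orbit realizing each $v\in V$, the whole lemma reduces to proving that $\mathcal U_{V,M}$ is dense in $\mathcal P_V$: each $\mathcal U_{V,M}\cup(\Hom(\T^2,\Leb)\setminus\overline{\mathcal P_V})$ is then open and dense, the intersection over all $(V,M)$ is generic, and any $f$ in it yields, for each admissible $V$ and each $M$, an $N\ge M$ with $\rho(F_N)=V$, whence a subsequence $N_i\to\infty$ with $\rho(F_{N_i})=V$.

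\textbf{The lower bound.} So I fix $f_0\in\mathcal P_V$, a tolerance $\eta>0$, and $M$, and must construct $g$ with $\dist(g,f_0)<\eta$ and $N\ge M$ with $\rho(F_N)=V$ stably. The inclusion $V\subset\rho(F_N)$ is obtained exactly as in the proof of Proposition \ref{RotDiscrDissip}: a persistent periodic orbit realizing $v_j$ carries a periodic closed set with nonempty interior and the same rotation vector, so for $N$ large enough the discretization has a genuine periodic orbit $\tilde O_j\subset E_N$ inside this set realizing $v_j$. Taking $N$ large also ensures every orbit and every period is resolved by the grid.

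\textbf{The upper bound via the techniques of \cite{Guih-discr}.} The reverse inclusion $\rho(F_N)\subset V$ is the heart of the argument. Every vector of $\rho(F_N)$ comes from a periodic orbit of $g_N$, hence lives in the recurrent set $\Omega(g_N)=\bigcap_k g_N^k(E_N)$, so it suffices to arrange $\Omega(g_N)=\tilde O_1\cup\cdots\cup\tilde O_n$. Here I would invoke the perturbation machinery for discretizations of generic conservative homeomorphisms: because the discretizations $g_N$ are highly non-injective, and because measure preservation does not prevent them from being strongly contracting along a subsequence of grids, one can choose $g$ within $\eta$ of $f_0$ and $N\ge M$ so that the combinatorics of $g_N$ funnel the forward orbit of every point of $E_N$ into one of the cycles $\tilde O_j$. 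Then the only periodic orbits of $g_N$ are the $\tilde O_j$, giving $\rho(F_N)\subset V$ and, with the previous step, $\rho(F_N)=V$. Perturbing $g$ slightly more so that $g(E_N)$ misses the Voronoi boundaries makes this equality locally constant, placing $g$ in $\mathcal U_{V,M}$ and finishing the density claim.

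\textbf{Main obstacle.} The hard part is precisely this global control: collapsing the entire grid onto the prescribed cycles while simultaneously keeping the perturbation Lebesgue-measure-preserving, keeping it $C^0$-small, and realizing it for some arbitrarily large $N$ so that the construction survives passage to a subsequence. This is exactly the flexibility established in \cite{Guih-discr}, namely that the discretizations of a generic conservative homeomorphism can be forced to have an arbitrarily degenerate recurrent set along suitable subsequences of grids; the work consists in verifying that this degeneracy is compatible with keeping the finitely many chosen cycles $\tilde O_j$ alive and with no spurious cycle of a different rotation vector surviving. A minor point to settle separately is that the lemma is stated for vectors merely ``realized by a periodic orbit'': one checks that generically each such rational vector is in fact realized by a persistent periodic orbit, so that working inside $\mathcal P_V$ in the packaging above loses no generality.
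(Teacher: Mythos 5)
Your Baire packaging (countable family of rational targets, openness by keeping $g(E_N)$ off the Voronoi boundaries, reduction to a density statement) is essentially the paper's, but both substantive steps have genuine problems. The lower bound $V\subset\rho(F_N)$ is imported verbatim from the dissipative Proposition \ref{RotDiscrDissip}: you claim a persistent periodic orbit carries a strictly periodic closed set $K$ with nonempty interior and the same rotation vector, and let the grid orbit get trapped in $K$. This mechanism is unavailable in $\Hom(\T^2,\Leb)$: a set $K$ with $\Leb(K)>0$ and $f^i(K)\subset\subset K$ yields the $f^i$-invariant set $\bigcap_n f^{ni}(K)$ of measure $\Leb(K)$, which for the (generically, by Oxtoby--Ulam) ergodic $f$ forces $\Leb(K)\ge 1/i$; small attracting periodic balls simply do not exist for generic conservative homeomorphisms, so the cycles $\tilde O_j$ are never produced this way. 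The upper bound --- which you correctly identify as the heart --- is then deferred wholesale to ``the flexibility established in \cite{Guih-discr}'', and you explicitly list the remaining work (keeping the $\tilde O_j$ alive, killing every spurious cycle) without doing it.

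The paper settles both inclusions at once with a single concrete construction that your proposal does not contain: for a target $D$ and a large $N$, subdivide $E_N$ into $2^k\ge\card D$ subgrids, apply the Lax--Alpern theorem on each subgrid to approximate $f$ by a cyclic permutation, then modify each of these cycles so that it follows a genuine periodic orbit of $f$ realizing a prescribed $v\in D$; the resulting finite map is a permutation of $E_N$ all of whose cycles have rotation vectors in $D$, with every vector of $D$ attained, and it is realized as the discretization of a conservative homeomorphism $C^0$-close to $f$ by the finite-map-extension proposition of \cite{Guih-discr}. Note that this is the opposite of your ``strongly contracting / funnelling'' picture: the discretization is made a bijection, the prescribed vectors are realized by construction rather than by shadowing into attractors, and no spurious cycle can survive because every point of the grid already lies on one of the $2^k$ prescribed cycles.
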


\begin{proof}[Proof of Lemma \ref{EnsRotDiscrCons}]
We denote by $\mathcal D_q$ the set of subsets of $\Q^2$ made of elements whose coordinates are of the type $p'/q'$, with $0<q'<q$ and $-q^2<p<q^2$. Consider the set
\[\bigcap_{q,N_0}\bigcap_{D\in\mathcal D_q}\bigcup_{N\ge N_0} \left\{\begin{array}{c}
f\in\Hom(\T^2,\Leb) \mid(\forall v\in D,\,v\text{ is realisable by a persistent}\\
\text{periodic point of $f$}) \implies \rho(F_N) = D 
\end{array}\right\}.
\]
To prove the lemma it suffices to prove that this set contains a $G_\delta$ dense. It is obtained in combining the arguments of Proposition 27 and Proposition 39 of \cite{Guih-discr}. We present the main arguments. Given $f$, $N_0$ and $D\in \mathcal D_q$ we take a large $N\ge N_0$ and divide the grid $E_N$ in $2^k$ similar smaller grids such that $2^k \ge \card D$. We want to create a finite map on $E_N$ whose rotation set is $D$, the perturbation of the homeomorphism $f$ is then easily obtained by finite map extension (Proposition 18 of \cite{Guih-discr}). To create such a finite map, on each of the $2^k$ subgrids we apply Lax-Alpern Theorem (see \cite{MR0272983} and \cite{Alpe-newp}, see also Theorem 20 of \cite{Guih-discr}) to obtain a cyclic permutation of the grid $E_N$ which is close to $f$. Then for every $v\in D$ we can choose one of these subgrids and modify this cyclic permutation by following the associated periodic orbit of $f$ in order to obtain a periodic orbit on $E_N$ whose rotation vector is $v$ (see Figure 3 of \cite{Guih-discr}). This creates a finite 
map on $E_N$ whose rotation set is exactly $D$ and which is as close as possible to $f$.
\end{proof}

The combination of the realisation theorem of J. Franks \cite[Theorem 3.2]{MR958891} and the fact that for a generic conservative homeomorphism the rotation set has nonempty interior (Proposition \ref{RotGeneCons}) leads to the corollary:

\begin{coro}\label{CoroRotDiscrCons}
If $f$ is generic among $\Hom(\T^2,\Leb)$, then for every compact subset $K$ of the rotation set of $F$ there exists a subsequence $f_{N_i}$ of discretizations such that $\rho_{N_i}(F)$ tends to $K$ for the Hausdorff topology. In particular $\rho^{discr}(F) = \rho(F)$.
\end{coro}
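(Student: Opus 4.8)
The plan is to combine the exact realisation of finite rational sets as discretized rotation sets (Lemma \ref{EnsRotDiscrCons}) with a density argument supplied by the nonempty interior of $\rho(F)$. Throughout I work on a single $G_\delta$ dense set, taken as the intersection of three such sets: the generic set of Proposition \ref{RotGeneCons} on which $\rho(F)$ has nonempty interior, the $G_\delta$ dense set produced by Lemma \ref{EnsRotDiscrCons}, and a third $G_\delta$ dense set $\mathcal{G}_{real}$ on which every rational vector of $\mathrm{int}(\rho(F))$ is realized by a \emph{persistent} periodic orbit of $f$. Since generic properties are stable under countable intersection in the Baire space $\Hom(\T^2,\Leb)$, this intersection is again $G_\delta$ dense.

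The approximation of $K$ proceeds through finite rational sets. As $\rho(F)$ is convex with nonempty interior, its interior is dense in it, so the rational points of $\mathrm{int}(\rho(F))$ are dense in $\rho(F)\supseteq K$; for each $i\in\N^*$ I pick a finite set $D_i\subset\mathrm{int}(\rho(F))\cap\Q^2$ with Hausdorff distance $d_H(D_i,K)<1/i$, by taking a finite $\tfrac{1}{2i}$-net of $K$ and replacing each net point by a nearby rational interior point. By Franks' realisation theorem \cite[Theorem 3.2]{MR958891} every rational point of $\mathrm{int}(\rho(F))$ is the rotation vector of a periodic point, and on $\mathcal{G}_{real}$ this orbit may be chosen persistent, so $D_i$ satisfies the hypotheses of Lemma \ref{EnsRotDiscrCons}. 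That lemma then yields a whole subsequence of grids along which the discretized rotation set equals $D_i$; extracting from it one increasing index $N_i$ with $\rho(F_{N_i})=D_i$ gives $d_H(\rho(F_{N_i}),K)=d_H(D_i,K)<1/i\to 0$, hence $\rho(F_{N_i})\to K$ for the Hausdorff topology.

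The final identity $\rho^{discr}(F)=\rho(F)$ then follows by specialization. The inclusion $\rho^{discr}(F)\subseteq\rho(F)$ is exactly Proposition \ref{InclPts}. For the reverse inclusion I apply the statement just proved to the compact set $K=\rho(F)$ itself: the resulting subsequence satisfies $\rho(F_{N_i})\to\rho(F)$, so each point of $\rho(F)$ is an accumulation point of $\bigcup_{N\ge M}\rho(F_N)$ for every $M$, that is $\rho(F)\subseteq\rho^{discr}(F)$.

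I expect the main obstacle to be the construction of the generic set $\mathcal{G}_{real}$: one must upgrade Franks' realisation of a rational interior vector $v$ to the \emph{persistent} realisation required by Lemma \ref{EnsRotDiscrCons}, and do so simultaneously for all rational $v$. For a fixed rational $v$, the set $G_v$ of homeomorphisms for which $v\in\mathrm{int}(\rho(F))$ implies that $v$ is realized by a persistent periodic orbit should contain a dense open set: when $v$ lies in the interior one makes the Franks periodic point persistent by a $C^0$-small conservative perturbation, persistence then being an open condition; when $v$ lies outside $\rho(F)$ the (standard) upper semicontinuity of the rotation set keeps it outside for nearby maps, so the implication is vacuous; and the only delicate case $v\in\partial\rho(F)$ is handled by a preliminary perturbation, composing with a small rotation as in Proposition \ref{RotGeneCons}, which pushes $v$ strictly inside or strictly outside. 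Setting $\mathcal{G}_{real}=\bigcap_{v\in\Q^2}G_v$ then produces the required $G_\delta$ dense set.
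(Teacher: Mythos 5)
Your overall architecture is exactly the one the paper intends (the paper itself only says that the corollary follows from ``the combination of'' Franks' realisation theorem and Proposition \ref{RotGeneCons}): approximate $K$ in Hausdorff distance by finite sets of rational vectors taken in $\mathrm{int}(\rho(F))$, realise these persistently, and feed them to Lemma \ref{EnsRotDiscrCons}. You are also right to flag that the lemma's proof really requires \emph{persistent} realisation, a point the paper glosses over, and the reduction of $\rho^{discr}(F)=\rho(F)$ to the Hausdorff statement with $K=\rho(F)$ is fine (reading the upper limit with the usual closure).

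The one step that does not work as written is the density of $G_v$ at a homeomorphism $f$ with $v\in\partial\rho(F)$. You propose to push $v$ strictly inside or outside by ``composing with a small rotation as in Proposition \ref{RotGeneCons}'', but composing $f$ with a rigid rotation $R_\alpha$ does \emph{not} translate the rotation set: the displacement function gains $+\alpha$, but the set of invariant measures changes as well, so only the \emph{mean} rotation vector is translated by exactly $\alpha$ (which is all Proposition \ref{RotGeneCons} uses). Nothing guarantees that $v$ leaves $\partial\rho(F)$ after such a perturbation, and since the rotation set is only upper semicontinuous you cannot argue by continuity either; nor is a rational non-extremal boundary point known to be realised by a periodic orbit, so you cannot realise $v$ directly in that case. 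The gap is easy to repair by never letting the boundary case arise: replace the antecedent ``$v\in\mathrm{int}(\rho(F))$'' by the quantified family ``$\overline B(v,1/n)\subset\rho(F)$'' for $n\in\N^*$. The negation of this antecedent is an \emph{open} condition on $f$ (if some $w\in \overline B(v,1/n)$ lies outside the compact set $\rho(F)$, upper semicontinuity keeps it outside for all nearby maps), while on maps satisfying it $v$ is an interior point and your Franks-plus-persistence perturbation applies; hence each $G_{v,n}$ contains a dense open set, and $\mathcal G_{real}=\bigcap_{v\in\Q^2,\,n\in\N^*}G_{v,n}$ still yields that every rational point of $\mathrm{int}(\rho(F))$ is persistently realised, which is all your construction of the sets $D_i$ uses. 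With this modification the proof is complete.
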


\section{Numerical simulations}

We have conducted numerical simulations of the rotation sets associated to both conservative and dissipative homeomorphisms. We have made the deliberate choice to choose homeomorphisms whose rotation set is known to be the square $[0,1]^2$. Of course these homeomorphisms are not the best candidates for ``generic'' homeomorphisms, but at least we are sure of what is the shape of the rotation set we want to obtain.

As an example of conservative homeomorphism we have taken $f_1 = Q\circ P$, and for the dissipative homeomorphism we have chosen the very similar expression $f_2 = R\circ Q\circ P$, where
\begin{eqnarray*}
P(x,y) & = & \Big(x\ , \ y+\frac12\big(\cos(2\pi(x+\alpha))+1\big)\\
       &   & + 0.0234\sin^2(4\pi(x+\alpha))\big(\sin(6\pi(x+\alpha))+0.3754\cos(26\pi(x+\alpha))\big)\Big),
\end{eqnarray*}
\begin{eqnarray*}
Q(x,y) & = & \Big(x + \frac12\big(\cos(2\pi (y+\beta))+1\big)\\
			 &   & + 0.0213\sin^2(4\pi (y+\beta))\big(\sin(6\pi(y+\beta))+0.4243\cos(22\pi(y+\beta))\big)\  , \ y\Big),
\end{eqnarray*}
\begin{eqnarray*}
R(x,y) & = & \big(x-0.0127\sin(4\pi (x+\alpha)) + 0.000824\sin(10\pi y)\ ,\\
       &   & y - 0.0176\sin(4\pi (y+\beta)) + 0.000631\sin(12\pi y)\big),
\end{eqnarray*}
with $\alpha = 0.00137$ and $\beta = 0.00159$.

The homeomorphisms $P$ and $Q$ are close to the homeomorphisms
\[\tilde P(x,y) = \Big(x\ , \ y+\frac12\big(\cos(2\pi(x+\alpha))+1\big)\Big)\]
and
\[\tilde Q(x,y) = \Big(x + \frac12\big(\cos(2\pi (y+\beta))+1\big)\Big);\]
it can easily be seen that the rotation set of the homeomorphism $Q\circ P$ is the square $[0,1]^2$, whose vertices are realized by the points $(0,0)$, $(0,1/2)$, $(1/2,0)$ and $(1/2,1/2)$. The perturbations $P$ and $Q$ of $\tilde P$ and $\tilde Q$ are small enough (in $C^2$ topology) to ensure that the rotation set remains the square $[0,1]^2$; these perturbations are made in order to make $f_1$ ``more generic''. The key property of the homeomorphism $R$ is that is has the fixed points of $f_1$ which realize the vertices of $[0,1]^2$ as fixed attractive points; this creates fixed attractive points which realize the vertices of the rotation set. 

We have chosen $R$ to be very close to the identity in $C^1$-topology to ensure that the basins of the sinks and sources are large enough. Indeed, J.-M. Gambaudo and C. Tresser have shown in \cite{Gamb-dif} that, even for dissipative diffeomorphisms defined by very simple formulas, sinks and sources are often undetectable in practice because the size of the their basins are too small.

\captionsetup{width=.47\linewidth}
\begin{figure}[b]
\begin{center}
\makebox[\textwidth]{\parbox{\textwidth}{%
\begin{center}
\begin{minipage}[c]{.49\textwidth}
	\includegraphics[width=6.6cm,trim = 1.2cm 0cm 1.1cm .7cm,clip]{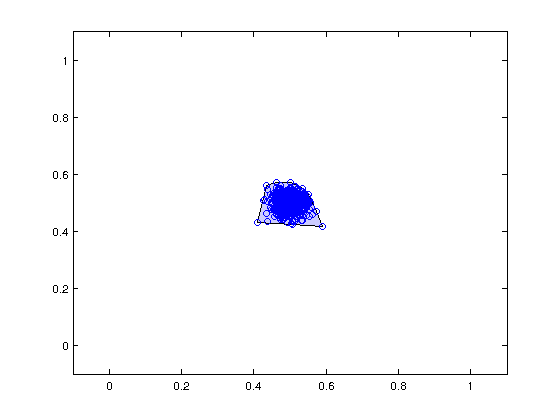}
	\caption{Observable rotation set of $f_1$, $1\,000$ orbits of length $1\,000$ with random starting points, computed with 52 binary digits, $\simeq$ 10s of calculus}\label{Fig1}
\end{minipage}
\begin{minipage}[c]{.49\textwidth}
	\includegraphics[width=6.6cm,trim = 1.2cm 0cm 1.1cm .7cm,clip]{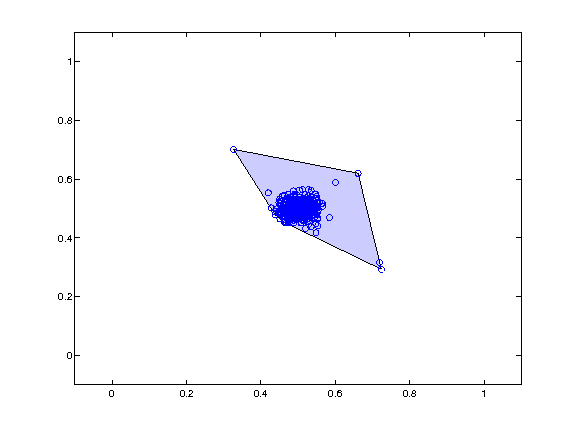}
	\caption{Observable rotation set of $f_1$, $250\,000$ orbits of length $1\,000$ with starting points on the grid $500\times 500$, computed with 52 binary digits, $\simeq$ 45min of calculus}\label{Fig2}
\end{minipage}
\end{center}
\bigskip
\begin{center}
\begin{minipage}[c]{.49\textwidth}
	\includegraphics[width=6.6cm,trim = 1.2cm 0cm 1.1cm .7cm,clip]{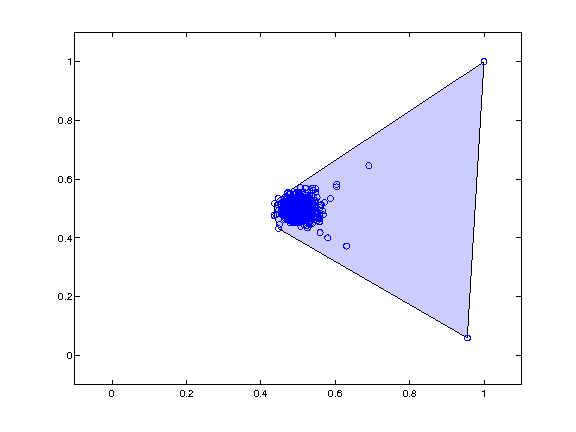}
	\caption{Observable rotation set of $f_1$, $562\,500$ orbits of length $1\,000$ with starting points on the grid $750\times 750$, computed with 52 binary digits, $\simeq$ 1h45min of calculus}\label{Fig3}
\end{minipage}
\begin{minipage}[c]{.49\textwidth}
	\includegraphics[width=6.6cm,trim = 1.2cm 0cm 1.1cm .7cm,clip]{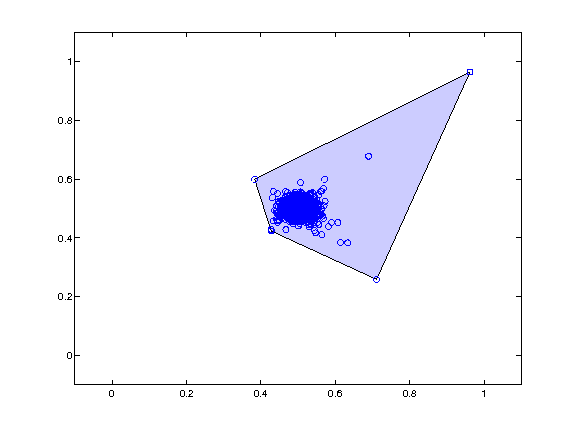}
	\caption{Observable rotation set of $f_1$, $1\,000\,000$ orbits of length $1\,000$ with starting points on the grid $1\,000\times 1\,000$, computed with 52 binary digits, $\simeq$ 3h of calculus}\label{Fig4}
\end{minipage}
\end{center}
}}
\end{center}
\end{figure}

We have made two kinds of simulations of the rotation set.
\begin{itemize}
\item In the first one we have computed the rotation vector of segments of orbits of length $1\,000$ with good precision (52 binary digits), in other words for a starting point $x\in \T^2$ we have computed $\frac{F^{1000}(x)-x}{1000}$. This is maybe the most simple process that can be used to find numerically the rotation set. It should lead to a good approximation of the observable rotation set. In particular, Proposition \ref{GeneDissip} suggests that, for the dissipative homeomorphism $f_2$, we should obtain a set which is close (for Hausdorff distance) to the square $[0,1]^2$, and if not at least a set whose convex hull is this square. On the other hand, for the conservative homeomorphism $f_1$, Proposition \ref{ExGeneCons} suggests that we should only obtain the mean rotation vector, which is close to $(1/2,1/2)$. We have made simulations both with $N$ random starting points and with $N^2$ starting points on grids $N\times N$.
\item In the second kind of simulations we have computed the rotation vectors of the periodic orbits of the discretization $(f_i)_{N}$ on a grid $N\times N$; these simulations calculate the discretized rotation sets. We have also computed the union of the discretized rotation sets for $N_{min}\le N\le N_{max}$, which represents the asymptotic discretized rotation set. The theory tells us that in both conservative and dissipative cases, for some $N$, the discretized rotation set should be close (for Hausdorff distance) to the square $[0,1]^2$; a weaker property would be that its convex hull should be close to this square. Moreover this should also be true for the asymptotic discretized rotation sets.
\end{itemize}

We shall notice that these two methods are formally the same: making simulations on a grid $N\times N$ is equivalent to calculate with $-\log_2 N$ binary digits (for example about 10 for $N=1\,000$). The only difference is that for the second method we use deliberately a very bad numerical precision.

\begin{figure}[t]
\begin{center}
\makebox[\textwidth]{\parbox{\textwidth}{%
\begin{center}
\begin{minipage}[c]{.49\textwidth}
	\includegraphics[width=6.6cm,trim = 1.2cm 0cm 1.1cm .7cm,clip]{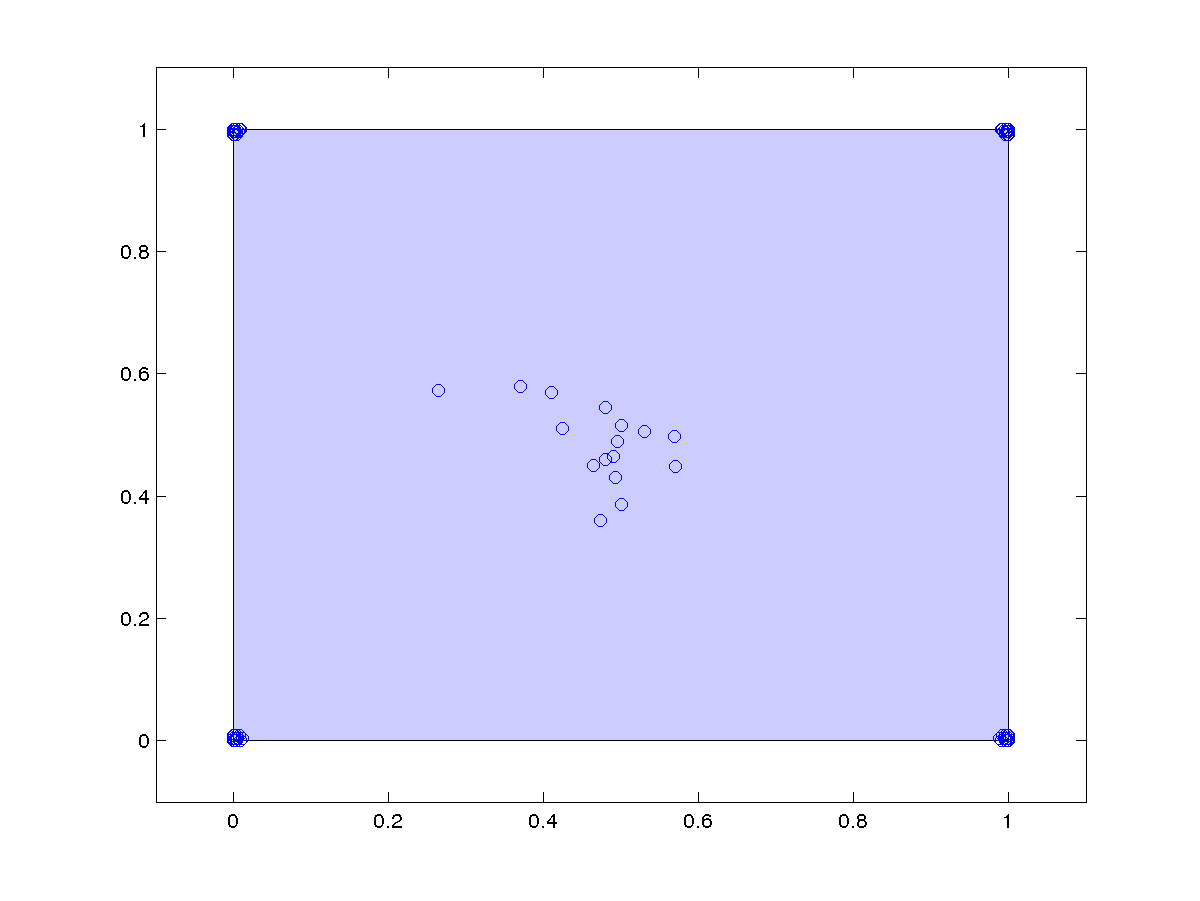}
	\caption{Discretized rotation set of $f_1$ on a grid $100\times 100$, $\simeq$ 0.4s of calculus}\label{Fig5}
\end{minipage}
\begin{minipage}[c]{.49\textwidth}
	\includegraphics[width=6.6cm,trim = 1.2cm 0cm 1.1cm .7cm,clip]{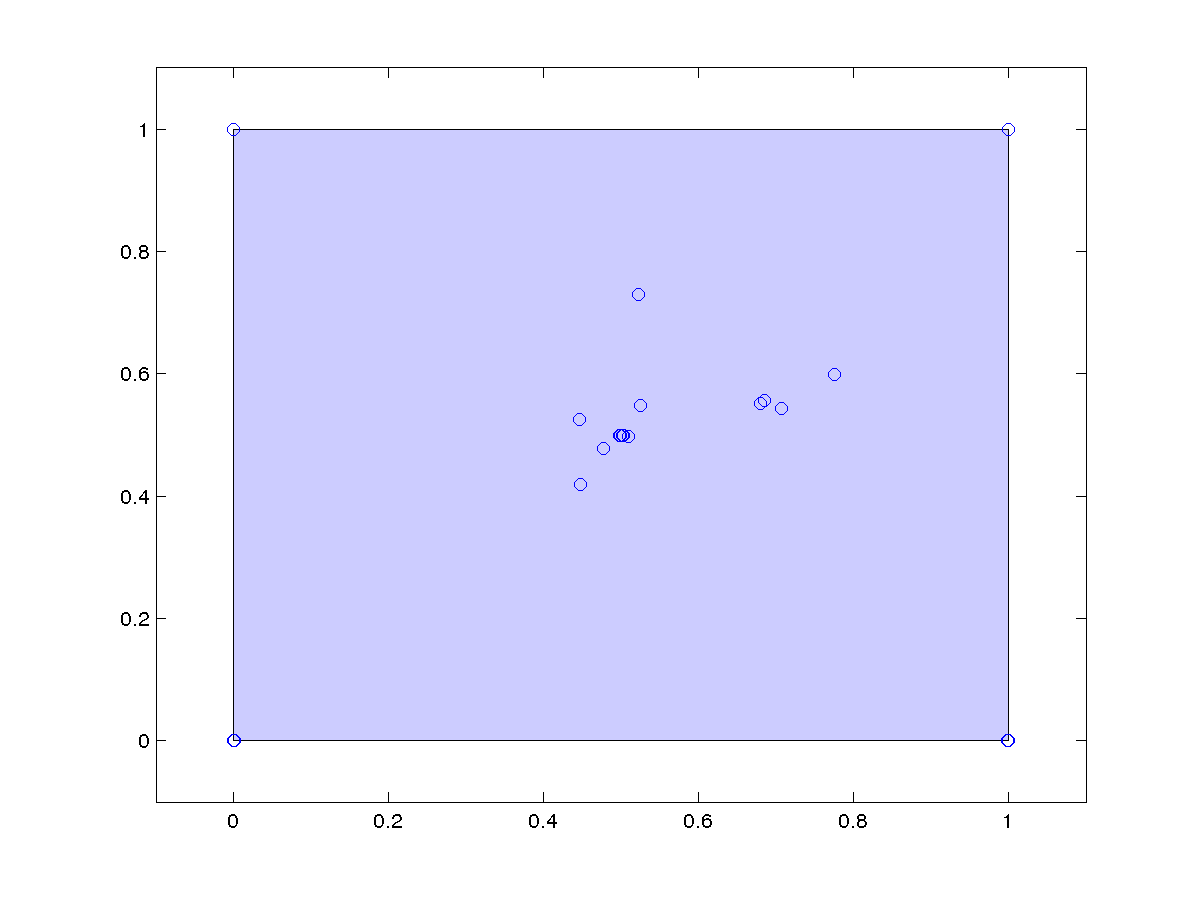}
	\caption{Discretized rotation set of $f_1$ on a grid $1000\times 1000$, $\simeq$ 10s of calculus}\label{Fig6}
\end{minipage}
\end{center}
}}
\end{center}
\end{figure}

\captionsetup{width=9cm}
\begin{figure}[t]
\begin{center}
	\includegraphics[width=9cm,trim = 1.2cm 0cm 1.1cm .7cm,clip]{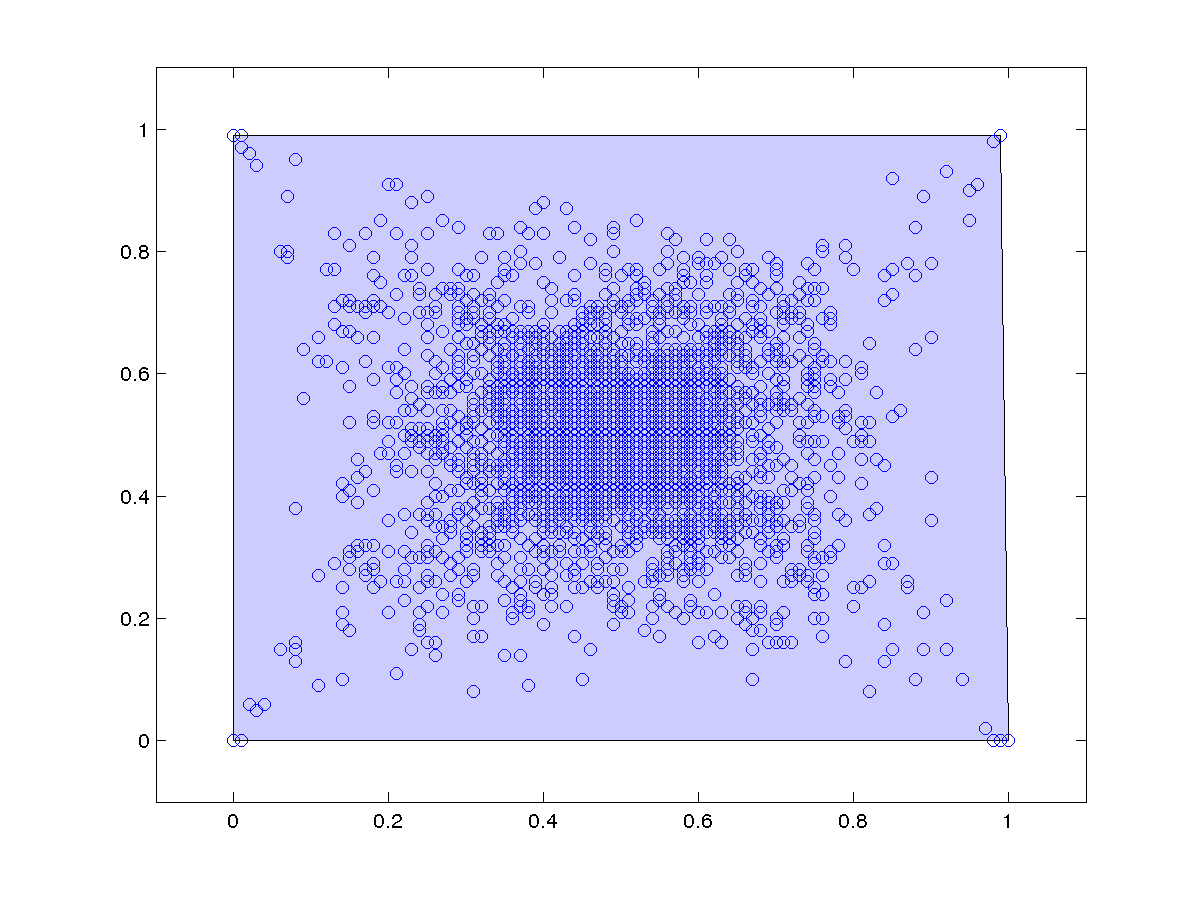}
	\caption{Asymptotic discretized rotation set of $f_1$ as the union of the discretized rotation sets on grids $N\times N$ with $100\le N\le 1\,000$, $\simeq$ 1h30min of calculus}\label{Fig7}
\end{center}
\end{figure}

Moreover, in practice, for a given calculation time, the calculation of the rotation set by discretization (i.e. by the second method) allows to compute much more orbits than the other method. More precisely, the algorithm we have used to compute the asymptotic discretized rotation set visits each point of the grid $N\times N$ once. Thus, for $N^2$ starting points we only have to compute $N^2$ images of the discretization of the homeomorphism on the grid; the number of rotation vectors we obtain is simply the number of periodic orbits of the discretization. So in a certain sense this second algorithm is much more faster than the naive algorithm consisting in computing long segments of orbits. All the simulations have been performed on a computer equipped with a processor Intel Core I5 2.40GHz.
\medskip

In the conservative case, the rotation vectors of the observable rotation set are mainly quite close to the mean rotation vector of $f_1$, as predicted by Proposition \ref{ExGeneCons}. In particular in Figure \ref{Fig1}, all the 1000 rotation vectors of the computed observable rotation set are in the neighbourhood of $(1/2,1/2)$. Thus, the behaviour of these vectors is governed by Birkhoff's ergodic theorem with respect to the ergodic measure $\Leb$; \emph{a priori} this behaviour is quite chaotic and converges slowly: a typical orbit will visit every measurable subset with a frequency proportional to the measure of this set, so the rotation vectors will take time to converge. In Figures \ref{Fig2}, \ref{Fig3} and \ref{Fig4}, we observe a few rotation vectors which are not close to the mean rotation vector; in particular in Figures \ref{Fig3} and \ref{Fig4} we detect the vertex $(1,1)$ of the rotation set (notice that it takes a lot of calculation time to observe this). This phenomenon appears for others sizes of grids, we do not have any explanation for it. We also notice that the shape of the obtained observable rotation set does depend a lot on the size of the grid. Anyway, even with 3 hours of calculus we are unable to recover the initial rotation set of the homeomorphism.

\captionsetup{width=.47\linewidth}
\begin{figure}[b]
\begin{center}
\makebox[\textwidth]{\parbox{\textwidth}{%
\begin{center}
\begin{minipage}[c]{.49\textwidth}
	\includegraphics[width=6.6cm,trim = 1.2cm 0cm 1.1cm .7cm,clip]{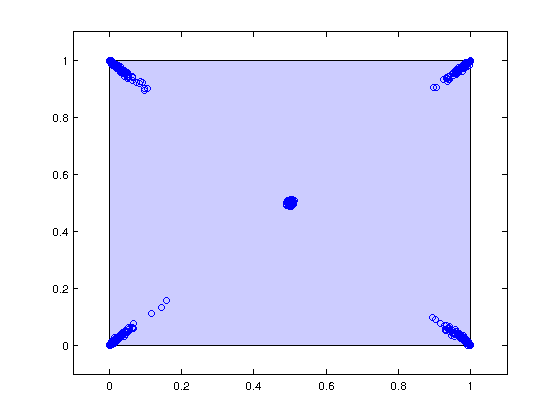}
		\caption{Observable rotation set of $f_2$, orbits with $1\,000$ random starting points, computed with 52 binary digits, $\simeq$ 10s of calculus}\label{Fig8}
\end{minipage}
\begin{minipage}[c]{.49\textwidth}
	\includegraphics[width=6.6cm,trim = 1.2cm 0cm 1.1cm .7cm,clip]{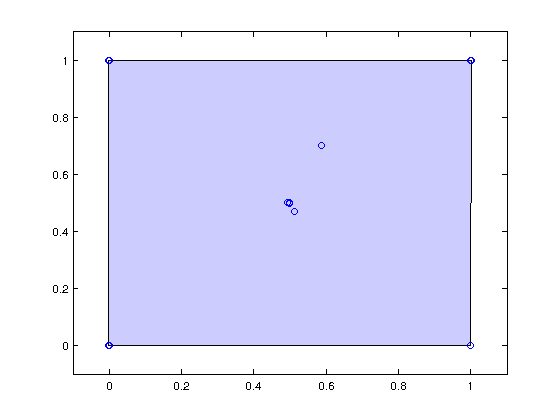}
	\caption{Discretized rotation set of $f_2$ on the grid $1\,000\times 1\,000$, $\simeq$ 10s of calculus\\ \\ \ }\label{Fig9}
\end{minipage}
\end{center}
}}
\end{center}
\end{figure}

On the other hand, the convex hull of the discretized rotation set gives quickly a very good approximation of the rotation set. For example on a grid $100\times 100$ (Figure \ref{Fig5}), with 0.4s of calculus we obtain a rotation set which is already very close to $[0,1]^2$. The same phenomenon occurs for a grid $1\,000\times 1\,000$ (Figure \ref{Fig6}). However, for a single size of grid, we do not obtain exactly the conclusions of Corollary \ref{CoroRotDiscrCons} which states that for some integers $N$ the discretized rotation set should be close to the rotation set for Hausdorff distance; here for each $N$ we only have a few points in the interior of $[0,1]^2$. That is why we represented the union of the discretized rotation sets on grids $N\times N$ with $100\le N\le 1\,000$ (Figure \ref{Fig7}). In this case we recover almost all the rotation set of $f_1$, except from the points which are close to one edge of the square but far from its vertices. The fact that we can obtain very easily the vertices of the rotation set can be due to the fact that in our example $f_1$ these vertices are realized by elliptic fixed points of the homeomorphism (in fact the derivative on this points is the identity). It is possible that if the vertices of the rotation set of a diffeomorphism were realized by hyperbolic periodic points, or elliptic periodic points with bigger period, it would be much more difficult to detect them by looking at the asymptotic discretized rotation set. In short,
\medskip

\begin{center}
\emph{When we calculate with 2 decimal places we find a very good approximation of the rotation set in 0.4s, but when we calculate with 16 decimal places we find a set which does not have much to do with the actual rotation set, even after 3 hours of computation.}
\end{center}
\medskip

In the dissipative case, the observable rotation set is very different from the one in the conservative case, even if the homeomorphism $f_2$ is very close to $f_1$ (approximately $10^{-2}$ close). Indeed, a lot of the obtained rotation vectors are close to one of the vertices of the real rotation set $[0,1]^2$ of $f_2$, the others being located around $(1/2,1/2)$ (see Figure \ref{Fig8}). This is what was predicted by the theory, in particular Lemma \ref{ExGeneDissip}: we detect rotation vectors realized by Lyapunov stable periodic points. The fact that the rotation vectors are not located exactly on the vertices of $[0,1]^2$ can be explained by the slow convergence of the orbits to the attractive points: it may take a while until the orbit become close to one of the Lyapunov stable 
periodic points.

The behaviour of the discretized rotation set for $f_2$ is quite similar to which we observed for $f_1$: the vertices of $[0,1]^2$ are detected and we only have a few points in the interior of the square (see Figure \ref{Fig9}). The small difference with the conservative case is that we have less points in the interior of this square, it can be explained by the fact that a lot of points are attracted by the periodic orbits whose rotation vectors are vertices of $[0,1]^2$ or the centre $(1/2,1/2)$ of the square.

\subsection*{Acknowledgements} I warmly thank François Béguin for his uncountable advices and suggestions about this work. I also thank Sylvain Crovisier for the trick he indicated to me to shorten the proof of Proposition \ref{RotGeneCons}. Finally, the ideas of this paper were born during the workshop ``Surfaces in Sao Paulo'', I would like to thank the organizers for inviting me as well as all the participants with whom I could have had many fruitful discussions during this week in Brazil.

\bibliographystyle{amsalpha}
\bibliography{../../Biblio}

\end{document}